\newtheorem{prop}{Proposition}
\newtheorem{thm}{Theorem}
\theoremstyle{definition}
\newtheorem{de}{Definition}[]
\newtheorem{ex}{Example}
\renewcommand{\textbf}[1]{\begingroup\bfseries\mathversion{bold}#1\endgroup}
\def\cE{\mathcal{E}}
\def\cM{\mathcal{M}}
\def\oM{\overline{\mathcal{M}}}
\def\hM{{\widehat{M}}}
\def\N{{\mathbb{N}}}
\def\Q{{\mathbb{Q}}}
\def\tv{{\tilde{v}}}
\def\cZ{{\mathcal{Z}}}
\begin{document}



\begin{center}
\Large{\textbf{Complete descriptions of the tautological rings of the moduli spaces of curves of genus lower or equal to 4 with marked points}}

\end{center}

~\\

\begin{center}
MALICK CAMARA
\end{center}

~\\

\begin{center}
\textbf{Abstract:} We study here the tautological rings of the moduli spaces of compact Riemann surfaces of genus $1,2,3$ and $4$ with marked points. The paper presents the complete descriptions of these rings by describing the groups of all degrees.   
\end{center}


\tableofcontents

\section{Introduction}
\subsection{Tautological rings of moduli spaces}
The topic of this 	article is the cohomology ring of the moduli space $\cM_{g,n}$ of genus $g$ smooth curves with $n$ marked points, more precisely, its subring called the {\em tautological ring} and denoted~$R^*(\cM_{g,n}) \subset H^{\rm even}(\cM_{g,n},\Q)$. 

The study of the tautological ring has been initiated by \linebreak D.~Mumford in~\cite{Mum}. In 1999, C.~Faber computed the tautological rings of the moduli space $\cM_g$ of smooth curves without marked points for $g \leq 15$~\cite{Fab}. Building on this work, Faber and Zagier conjectured a set of relations between the tautological classes of $\cM_g$, namely the $\kappa$-classes; these were later proven to be true relations by R.~Pandharipande and A.~Pixton~\cite{PP} using localization on the space of stable quotients.
 
A generalization of these relations to the space $\oM_{g,n}$ of stable curves was proposed by A. Pixton in \cite{Pix}. These conjectural relations were proven to be true relations by R.~Pandharipande, A.~Pixton, and D.~Zvonkine in~\cite{PPZ}. The proof is based on the study of a cohomological field theory obtained from Witten's $r$-spin classes (see \cite{Wit}, \cite{PV}, \cite{Chi}, \cite{Moc}, \cite{FJR} for details about these classes). Relations on $\cM_{g,n}$ can be obtained by restriction of these relations from $\oM_{g,n}$. 

The tautological ring of $\cM_{g,n}$ is generated by tautological classes $\psi_1, \dots, \psi_n \in R^1$ associated with the marked points and by the classes $\kappa_m \in R^m$ for $m \geq 1$ 
. Madsen and Weiss~\cite{MadWei} proved that there are no relations between these classes in degree $d \leq g/3$. Moreover, for an integer $d$ satisfying $1 \leq d \leq g/3$, we have $H^{2d} (\cM_{g,n},\Q) = R^d(\cM_{g,n})$ and $H^{2d-1}(\cM_{g,n}, \Q)=0$. 

Building on E.~Looijenga's work~\cite{Loo}, E.~Ionel~\cite{Ion1} proved the following vanishing property: $R^d = 0$ for $d \geq g \geq 1$. The rank of the group $R^{g-1} (\cM_{g,n})$ is known to be equal to~$n$ for any~$g \geq 2$. Moreover, the classes $\psi_i^{g-1}$ form a basis of this space, and an explicit expression in this basis of all other elements is known. Following a conjecture by D.~Zvonkine this was proved by A.~Buryak, S.~Shadrin, and D.~Zvonkine in~\cite{BSZ}.

\subsection{Main results}

\paragraph{A complete description of the rings $R^*(\cM_{g,n})$ for \textbf{$g \leq 4$.}}

An element of the tautological ring can be shown to vanish by proving that it lies in the span of Pixton's tautological relations restricted to $\cM_{g,n}$. On the other hand, since the top degree (that is, degree $g-1$) part of the tautological ring is explicitly known by~\cite{BSZ}, one can show that an element of $R^d(\cM_{g,n})$ does not vanish by proving that its product with a well chosen element of $R^{g-1-d}(\cM_{g,n})$ does not vanish in $R^{g-1}(\cM_{g,n})$. Thus the tautological ring is in a way ``bounded'' from above and from below. It turns out that for $g \leq 4$ these bounds coincide with each other; in other words, every element that cannot be shown to vanish can be shown to not vanish and vice versa. 

Proving this eventually boils down to computing ranks of large matrices. For instance, in the hardest case, $g=4$, $d=2$, one has to compute the rank of a matrix of size
$$
\left(\frac{n(n+1)}2+1\right)  \times  (n^2+1).
$$
This involves the following steps. One needs to extract a well-chosen square matrix out of the rectangular matrix above;  find a family of eigenvectors of this matrix (treated as the matrix of a linear map); find a family of invariant planes of the matrix; show that the determinant on each invariant plane does not vanish; determine the span of the eigenvectors and invariant planes; determine the action of the linear map in the quotient space by this span; show that the determinant of the linear map on the quotient space does not vanish; treat differently the cases where, due to a numerical exception, one of the invariant planes contains one of the eigenvectors. It would be interesting to find a geometric interpretation of all these eigenvectors and invariant planes, but so far we do not have any.

Denote by $r^d_g(n)$ the rank of $R^d(\cM_{g,n})$.

\begin{thm}
We have
$$
\begin{array}{llll}
r^0_1(n) = 1, \\
r^0_2(n) = 1, & r^1_2(n) = n, \\
r^0_3(n) = 1, & r^1_3(n) = n+1, & r^2_3(n) = n, \\
r^0_4(n) = 1, & r^1_4(n) = n+1, & r^2_4(n) = \frac{n(n+1)}2+1, & r^3_4(n) = n.
\end{array}
$$
\end{thm}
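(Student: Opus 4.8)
The plan is to go through the pairs $(g,d)$ with $g \leq 4$ one at a time, disposing of all but one of them by results already recalled above. For $d \geq g$ one has $R^d(\cM_{g,n}) = 0$ by the Looijenga--Ionel vanishing, and $R^0(\cM_{g,n}) = \Q$ since $\cM_{g,n}$ is connected. For $d = g-1$ the theorem of Buryak--Shadrin--Zvonkine gives $R^{g-1}(\cM_{g,n}) \cong \Q^n$ with basis $\psi_1^{g-1}, \dots, \psi_n^{g-1}$, which yields $r^1_2(n) = r^2_3(n) = r^3_4(n) = n$. For $1 \leq d \leq g/3$ the theorem of Madsen--Weiss says there are no relations among the generators in degree $d$, so $R^d(\cM_{g,n})$ is free on the degree-$d$ monomials in $\psi_1, \dots, \psi_n$ and $\kappa_1, \kappa_2, \dots$; taking $d = 1$, which is admissible for $g = 3$ and $g = 4$, the only such monomials are $\psi_1, \dots, \psi_n$ and $\kappa_1$, so $r^1_3(n) = r^1_4(n) = n+1$. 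The single remaining pair is $(g,d) = (4,2)$, to which the rest of the argument is devoted.

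For this case the idea is to squeeze $R^2(\cM_{4,n})$ between two bounds and show they coincide. \emph{From above:} $R^2(\cM_{4,n})$ is the quotient of the $\Q$-space spanned by the degree-$2$ monomials $\psi_i\psi_j$ (for $i \leq j$), $\psi_i\kappa_1$, $\kappa_1^2$ and $\kappa_2$, which has dimension $\tfrac{n(n+1)}{2} + n + 2$, by the subspace spanned by Pixton's tautological relations on $\oM_{4,n}$ restricted to $\cM_{4,n}$; writing out these relations in degree $2$ and row-reducing bounds $r^2_4(n)$ from above. \emph{From below:} multiplication followed by the Buryak--Shadrin--Zvonkine identification gives a pairing
$$
R^2(\cM_{4,n}) \times R^1(\cM_{4,n}) \longrightarrow R^3(\cM_{4,n}) \cong \Q^n,
$$
and exhibiting explicit classes in $R^2$ and $R^1$, together with coordinates of $\Q^n$ on which their products do not vanish, bounds $r^2_4(n)$ from below. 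The assertion of the theorem is that the best bound from Pixton's relations equals the best bound from the pairing, the common value being $\tfrac{n(n+1)}{2} + 1$.

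The hard part, and the point where the main work lies, is carrying out the matching rank computation uniformly in $n$. As indicated in the introduction, after using the degree-$2$ relations to eliminate the $\kappa$-classes and the redundant $\psi$-monomials one is left with a square matrix of size about $\tfrac{n(n+1)}{2} + 1$, representing a linear map whose rank must be shown to be maximal; since its entries depend on $n$ this cannot be settled by a single determinant evaluation. The route I would follow is the structural one: exhibit an explicit family of eigenvectors of the map, an explicit family of invariant $2$-planes, check that the determinant of the map on each such plane is non-zero, determine the span of all these eigenvectors and planes and the induced map on the quotient space, verify that the determinant there is non-zero too, and finally treat separately the exceptional values of $n$ for which one of the invariant planes degenerates by containing one of the eigenvectors. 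Assembling these pieces gives $r^2_4(n) = \tfrac{n(n+1)}{2} + 1$ and finishes the proof. It would be more satisfying to replace this matrix analysis with a conceptual argument — say a geometric interpretation of the eigenvectors and the invariant planes — but I do not see one at present.
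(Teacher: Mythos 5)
Your proposal follows essentially the same route as the paper: the cases $d=0$, $d\geq g$, $d=g-1$, and $d=1$ for $g=3,4$ are dispatched exactly as in the paper (connectedness, Ionel's vanishing, the Buryak--Shadrin--Zvonkine socle, and Madsen--Weiss stability), and for the remaining case $(g,d)=(4,2)$ you propose precisely the paper's squeeze between Pixton's relations from above and a nondegenerate pairing into the socle from below, followed by the same structural analysis of the resulting matrix (eigenvectors, invariant planes, quotient block, exceptional $n$). The only caveat is that for $(4,2)$ your text is a program rather than a proof: the actual content --- the two degree-$2$ Pixton relations and the elimination of $\kappa_2,\kappa_1\psi_i$, the intersection-number matrix built from $\lambda_4\lambda_3$-classes via the BSZ pushforward formula, the explicit vectors $u_i,v_i,w_{ijkl},t_{ijkl}$, and the exceptional case $n\equiv 2 \pmod 8$ --- still has to be supplied, and that is where all the work of the paper lies.
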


As a by-product of our computations we also show that in genus 3 and degree~2, the relations of Buryak-Shadrin-Zvonkine are obtained from Pixton's relations by a change of variables whose coefficients are polynomials in~$n$.

\section{Preliminaries}

\subsection{The tautological ring}

In this section we briefly introduce some of the basic definitions involved in the study of the tautological ring of the moduli spaces of curves. We refer to \cite{Zvo} for more details.\\

Writing $\oM_{g,n}$ the moduli space of stable curves of genus $g$ with $n$ marked points, we will always assume that $g$ and $n$ satisfy the stability condition
$$2g-2+n>0.$$

Let's write $\pi:\oM_{g,n+1}\rightarrow\oM_{g,n}$ the forgetful map which forgets the last marked point. It will be convenient to write $\pi$ maps forgetting several points as well. The context will make it clear. \\

We write $\psi_{i}\in H^2(\oM_{g,n},\Q)$, for $1\leq i\leq n$, the first Chern class of the line bundle over $\oM_{g,n}$ whose fiber over any point is the cotangent line at the $i$-th marked point of the corresponding curve.\\

We define the $\kappa$-classes as follows. For $k\in\N$, 
$$\kappa_{k}:=\pi_{*}(\psi_{n+1}^{k+1}).$$
For $m$ positive integers $k_1,\dots, k_{m}$, with $m\geq 1$, we write 
$$\kappa_{k_{1},\dots,k_{m}}:=\pi_{*}(\psi_{n+1}^{k_{1}+1}\psi_{n+2}^{k_{2}+1}\dots\psi_{n+m}^{k_{m}+1})$$
These classes can be expressed as polynomials of $\kappa$-classes with one index.\\

The $k$-th Chern class of the Hodge bundle will be denoted $\lambda_{k}\in H^{2k}(\oM_{g,n},\Q)$.

\begin{de}
The {\em tautological ring} $R^*(\cM_{g,n})$ is the subring of $H^*(\cM_{g,n}, \Q)$ generated by the classes $\psi_1, \dots, \psi_n$ and $\kappa_1, \kappa_2, \dots$. 
\end{de}

The study of this ring boils down to describing the relations between the $\psi$- and $\kappa$-classes.  In this section we describe the following results.
\begin{itemize}
\item Pixton's relations (more precisely, their restrictions from $\oM_{g,n}$ to $\cM_{g,n}$): this is a family of relations among the $\psi$- and $\kappa$-classes that is conjectured to be complete. 
\item The vanishing, socle, and top intersection properties of the tautological rings: these properties completely describe the tautological ring in degrees $d \geq g-1$.
\item Mumford's stability: this is a claim that the tautological ring stabilizes to a free ring as $g \to \infty$. 
\item Mumford's formula for the $\lambda$-classes: this formula expresses the $\lambda$-classes in terms of $\psi$- and $\kappa$-classes.
\end{itemize}

\subsection{Pixton's relations}

In \cite{Pix}, A. Pixton conjectured a set of relations for the tautological ring of moduli spaces of stable nodal curves with marked points generalizing Faber's relations. These relations are proven to be true relations in \cite{PPZ} and they play a crucial role in the work presented here. We present the restriction of these relations to the moduli space of smooth stable curves with marked points.

We have the series
$$A:=\sum_{n\geq0}\frac{(6n)!}{(2n)!(3n)!}T^n=1+60T+27720T^2...$$
and
$$B:=\sum_{n\geq0}\frac{6n+1}{6n-1}\frac{(6n)!}{(2n)!(3n)!}T^n=-1+84T+32760T^2...$$

For any natural number $i$ not congruent to $2$ modulo 3, we write
$$C_{3i}=T^iA$$
$$C_{3i+1}=T^iB.$$

For a power series $S$, $[S]_{T^n}$ denotes the $n$th coefficient. We transform the power series $S$ variables $T^n$ into a power series denoted $\{S\}$ in the variables $K_nT^n$, so we have

$$\lbrace S\rbrace:=\sum_{n\geq0}[S]_{T^n}K_nT^n.$$

Let $l$ and $e_1,\dots,e_l$  be a nonnegative integer and let $\kappa$ denote the linear operator defined by
$$\kappa(K_{e_1}...K_{e_l})=\kappa_{e_1,...,e_l}=\sum_{\tau\in S_l}\prod_{c \mbox{ cycle in } \tau}\kappa_{e_c},$$
with $e_c$ the sum of the $e_j$ appearing in the cycle~$c$.

Let $\sigma$ be a partition with no part congruent to 2 modulo 3 and $a_1,...,a_n$ positive integers not congruent to 2 modulo 3.

If 
$$\left\{
\begin{array}{cl}
3d\geq g+1+\sum\sigma_i+\sum a_i\\
3d\equiv g+1+\sum\sigma_i+\sum a_i\mod 2
    \end{array}
\right.,$$
then 
$$[\kappa(\exp(\lbrace 1-A\rbrace)\lbrace C_{\sigma_1}\rbrace...\lbrace C_{\sigma_l}\rbrace)\prod C_{a_i}(\psi_iT)]_{T^2}=0.$$

\subsection{Vanishing, socle, and top intersection for with no marked point} 

These properties were conjectured by C.~Faber~\cite{Fab} and later proved by E.~Looijenga~\cite{Loo}, E.~Getzler and R.~Pandharipande~\cite{GP}. Here we assume that $g \geq 2$.

\paragraph{Vanishing.} We have $R^d(\cM_g)= 0$ for $d \geq g-1$.

\paragraph{Socle.} The rank of $R^{g-2}(\cM_g)$ is equal to~1 and $R^{g-2}(\cM_g)$ is spanned by $\kappa_{g-2}$.

\paragraph{Top intersection.} For positive integers $k_1,\dots ,k_m$ such that\linebreak $\sum_{i=1}^m k_i=g-2$, we have
$$
\kappa_{k_1,\dots ,k_m}=\frac{(2g-3+m)!(2g-1)!!}{(2g-1)!\prod_{i=1}^m(2k_i+1)!!}\kappa_{g-2} \in R^{g-2}(\cM_{g-2}).
$$

\subsection{Vanishing, socle, and top intersection with marked points,\\
Buryak-Shadrin-Zvonkine Relations}

~~~~These properties were proved by E.~Ionel~\cite{Ion1}, A.~Buryak, S.~Shadrin, and D.~Zvonkine~\cite{BSZ}. The tautological ring of $\cM_{1,n}$ is isomorphic to~$\Q$; thus we can assume that $g \geq 2$.

\paragraph{Vanishing.} We have $R^d(\cM_{g,n})= 0$ for $d \geq g$.

\paragraph{Socle.} The rank of $R^{g-1}(\cM_{g,n})$ is equal to~$n$ and $R^{g-1}(\cM_{g,n})$ is spanned by $\psi_1^{g-1}, \dots, \psi_n^{g-1}$.

\paragraph{Top intersection.} For nonnegative integers $d_1, \dots, d_n$ and positive integers $k_1,\dots ,k_m$ such that $\sum_{i=1}^n\psi_i+\sum_{i=1}^m k_i=g-1$, we have
\begin{align*}
\prod_{i=1}^n\psi_i^{d_i}\, \kappa_{k_1,...,k_m}=&\frac{(2g-1)!!}{\prod(2d_i+1)!!\prod(2k_j+1)!!}\frac{(2g-3+n+m)!}{(2g-2+n)!}\\
&~~~~~~~~~~~\times
\sum_{i=1}^n \frac{(2g-2+n) d_i + \sum k_j}{g-1} \psi_i^{g-1}. 
\end{align*}

\begin{ex} 
For $g=3$ we have
\begin{align*}
\kappa_2&=\sum_{i=1}^n\psi_i^2,\\
\kappa_{1,1}&=\frac{5}{3}(n+5)\sum_{i=1}^n\psi_i^2,\\
\kappa_1\psi_i&=\frac{5}{6}(n+5)\psi_i^2+ \frac56\sum_{j\neq i}\psi_j^2,\\
\psi_i\psi_j&=\frac{5}{6}(\psi_i^2 + \psi_j^2).
\end{align*}
\end{ex}

\section{The tautological rings in genus 1,2,3} 

We study here the tautological rings of moduli spaces of smooth Riemann surfaces of genus $g \leq 3$ with $n$ marked points.

\begin{prop} \ \\
{\bf Genus 1.} We have
\begin{align*}
R^0(\cM_{1,n}) &= \Q,\\
R^d(\cM_{1,n}) &= 0 \quad \mbox{ for } d \geq 1.
\end{align*}
{\bf Genus 2.} We have
\begin{align*}
R^0(\cM_{2,n}) &= \Q,\\
R^1(\cM_{2,n}) &= \Q \langle \psi_1, \dots, \psi_n \rangle,\\
R^d(\cM_{2,n}) &= 0 \quad \mbox{ for } d \geq 2.
\end{align*}
The only relation of degree~1 is $\kappa_1 = \sum_{i=1}^n \psi_i$.\\
{\bf Genus 3.} We have
\begin{align*}
R^0(\cM_{3,n}) &= \Q,\\
R^1(\cM_{3,n}) &= \Q \langle \kappa_1, \psi_1, \dots, \psi_n \rangle,\\
R^2(\cM_{3,n}) &= \Q \langle \psi_1^2, \dots, \psi_n^2 \rangle,\\
R^d(\cM_{3,n}) &= 0 \quad \mbox{ for } d \geq 3.
\end{align*}
The relations in degree~2 are listed in Example 1.
\end{prop}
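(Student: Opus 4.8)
The plan is to prove each genus case by combining an upper bound coming from Pixton's relations with a lower bound coming from the socle/top-intersection results. For the upper bound, one restricts the relations of Section 2.2 to the appropriate low degree and exhibits, for each generator of $R^d(\cM_{g,n})$ that we claim is redundant, an explicit Pixton relation expressing it in terms of the claimed basis. For the lower bound, one uses the top intersection formula: an element $x \in R^d(\cM_{g,n})$ is shown to be nonzero by pairing it against a suitable $y \in R^{g-1-d}(\cM_{g,n})$ and checking that $xy \neq 0$ in the socle $R^{g-1}(\cM_{g,n}) \cong \Q^n$, whose structure is completely known.

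For genus 1, there is essentially nothing to do beyond invoking the vanishing property $R^d(\cM_{1,n})=0$ for $d\geq 1$ (the stated isomorphism $R^*(\cM_{1,n})\cong\Q$), since $R^0$ is always $\Q$. For genus 2, vanishing gives $R^d=0$ for $d\geq 2$, and the socle result says $R^1(\cM_{2,n})$ has rank $n$ with basis $\psi_1^{g-1}=\psi_1,\dots,\psi_n$; hence the only thing to prove is that $\kappa_1$ lies in this span, i.e. the relation $\kappa_1=\sum_i\psi_i$. This single relation should be extracted directly from Pixton's relations in the lowest nontrivial degree for $g=2$ (one writes out $[\kappa(\exp(\{1-A\})\dots)\prod C_{a_i}(\psi_i T)]_{T^2}=0$ with the minimal choice of $\sigma$ and $a_i$ making the degree and parity conditions hold, and reads off the linear relation among $\kappa_1$ and the $\psi_i$). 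One should double-check this is the \emph{only} degree-1 relation by noting $\dim R^1 = n$ is forced from below by the socle pairing in degree $g-1=1$.

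For genus 3 this is the substantive case. Vanishing gives $R^d=0$ for $d\geq 3$. In degree 2, the socle result already asserts $R^2(\cM_{3,n})$ has rank $n$ with basis $\psi_1^2,\dots,\psi_n^2$, and the relations reducing $\kappa_2$, $\kappa_{1,1}$, $\kappa_1\psi_i$, $\psi_i\psi_j$ to this basis are exactly those listed in Example 1 (these are the Buryak--Shadrin--Zvonkine relations, which follow from the top intersection formula with $g=3$). So for degree 2 the content is again just to confirm no further relations exist, which is immediate since the $\psi_i^2$ are linearly independent by the socle statement. In degree 1, we must show $R^1(\cM_{3,n})=\Q\langle\kappa_1,\psi_1,\dots,\psi_n\rangle$ has rank exactly $n+1$: there are no relations in degree 1. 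The upper bound (rank $\le n+1$) is trivial since there are only $n+1$ generators $\kappa_1,\psi_1,\dots,\psi_n$ in degree 1 (the only other degree-1 class would be a $\kappa$ with one index, and $\kappa_1$ is it). The lower bound (rank $\ge n+1$) is the crux: I would pair each candidate relation $c_0\kappa_1+\sum_i c_i\psi_i$ against well-chosen elements of $R^1(\cM_{3,n})$ and compute the resulting products in $R^2(\cM_{3,n})\cong\Q^n$ using Example 1, obtaining a system whose only solution is $c_0=c_i=0$. Concretely, $(c_0\kappa_1+\sum c_i\psi_i)\cdot\psi_j$ expands via $\kappa_1\psi_j=\frac56(n+5)\psi_j^2+\frac56\sum_{k\neq j}\psi_k^2$ and $\psi_i\psi_j=\frac56(\psi_i^2+\psi_j^2)$, and requiring all these to vanish forces the coefficients to be zero (the matrix of this pairing is a rank-$(n+1)$ — indeed, one computes it is invertible on a suitable $(n+1)$-dimensional subspace, using that $\frac56(n+5)$ differs from $\frac56$).

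The main obstacle is the genus-3 degree-1 non-vanishing: one must verify that the pairing $R^1\times R^1\to R^2\cong\Q^n$ is nondegenerate on the $(n+1)$-dimensional space spanned by $\kappa_1$ and the $\psi_i$, which amounts to checking that an explicit $(n+1)\times$(something) matrix with entries polynomial in $n$ has rank $n+1$ — a finite but genuine linear-algebra computation, and the template for the much harder genus-4 analysis described in the introduction. Everything else reduces to reading relations off Pixton's formula in low degree and quoting the socle and top-intersection results already recorded in Section 2.
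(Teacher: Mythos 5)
Your proposal is correct in substance but handles the one nontrivial case by a different mechanism than the paper. The paper's proof is essentially a list of citations: vanishing for $d\geq g$ from Ionel, the rank and relations in the socle degree $g-1$ from Buryak--Shadrin--Zvonkine (this already covers $R^1(\cM_{2,n})$ including the relation $\kappa_1=\sum\psi_i$, and $R^2(\cM_{3,n})$ including Example~1), $R^0=\Q$ always, and then for the single remaining group $R^1(\cM_{3,n})$ it invokes Mumford's stability (Madsen--Weiss): there are no relations among the $\psi$- and $\kappa$-classes in degree $d\leq g/3$, and $1\leq 3/3$. You instead propose to establish the lower bound $\dim R^1(\cM_{3,n})\geq n+1$ by pairing a candidate relation $c_0\kappa_1+\sum c_i\psi_i$ into the socle $R^2(\cM_{3,n})\cong\Q^n$ via the Example~1 formulas. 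This works and is a legitimately different (and more self-contained) argument --- indeed it is exactly the ``bounded from below'' philosophy the introduction describes and the template for the genus-4 computation --- whereas the paper's citation is a one-liner but imports a deep theorem. Your route also derives $\kappa_1=\sum\psi_i$ from Pixton's relations rather than from the BSZ top-intersection formula; both are fine, though the \emph{completeness} of that single relation in either approach still rests on the socle statement, as you note.

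One small point to repair if you carry out your pairing argument: multiplying only against $\psi_1,\dots,\psi_n$ yields $n^2$ linear conditions on the $n+1$ coefficients, and for $n\geq 2$ these do force $c_0=c_1=\dots=c_n=0$ (the off-diagonal coefficients give $c_k=-c_0$ for all $k$, and the diagonal ones then give $4c_0=0$). But for $n=1$ you get the single condition $\frac{5}{6}(n+5)c_0+c_1=0$, which has nontrivial solutions; you must also pair against $\kappa_1$ (computing $\kappa_1^2$ in the socle via $\kappa_{1,1}$ and $\kappa_2$) to get a nondegenerate $2\times 2$ Gram matrix. Your phrase ``well-chosen elements of $R^1$'' leaves room for this, but the explicit computation you sketch uses only the $\psi_j$, so the exceptional case $n=1$ needs to be addressed separately.
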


\begin{proof}
The vanishing for $d \geq g$ is known by~\cite{Ion1}. The rank and the relations in $R^{g-1}(\cM_{g,n})$ are known by~\cite{BSZ}. The group $R^0$ is always equal to~$\Q$. The only remaining case is $R^1(\cM_{3,n})$ where, by Mumford's stability~\cite{MadWei} there are no relations. 
\end{proof}

\subsection{The top tautological group in genus 3}
We use our two  families of relations in $R^2(\cM_{3,n})$. One is due to Buryak-Shadrin-Zvonkine~\cite{BSZ}
. This family is known to be complete and is used to determine the rank of $R^2(\cM_{3,n})$.  The other family is that of Pixton's relations \cite{Pix}
. The completeness of this family is, in general, not known. We prove it here for $g=3$, degree~2 and any~$n$.

\begin{thm}
Pixton's relations give a complete system of relations in $R^2(\cM_{3,n})$ for all~$n$. Buryak-Shadrin-Zvonkine's relations are linear combinations of Pixton's relations whose coefficients are polynomials in~$n$.
\end{thm}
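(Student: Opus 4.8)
The plan is to reduce the statement to an explicit, finite linear-algebra computation carried out uniformly in~$n$. Recall first the set-up: $R^2(\cM_{3,n})$ is generated as a $\Q$-vector space by the $\binom n2+2n+2$ monomials $\kappa_1^2$, $\kappa_2$, $\kappa_1\psi_i$ $(1\le i\le n)$, $\psi_i\psi_j$ $(1\le i<j\le n)$ and $\psi_i^2$ $(1\le i\le n)$, and, since here $d=g-1$, the socle statement of Buryak--Shadrin--Zvonkine gives that its rank is exactly~$n$, with basis $\psi_1^2,\dots,\psi_n^2$. Pixton's relations restrict to honest relations on $\cM_{3,n}$ by~\cite{PPZ}, so the quotient of the $(\binom n2+2n+2)$-dimensional monomial space by the span of the degree-$2$ Pixton relations has rank at least~$n$. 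Hence completeness follows once we show this quotient has rank at most~$n$; the cleanest way to do so --- and this is exactly the second assertion --- is to exhibit each Buryak--Shadrin--Zvonkine relation of Example~1 as a $\Q$-linear combination of Pixton's relations with coefficients polynomial in~$n$. Thus the whole theorem is subsumed in proving the second sentence.

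First I would enumerate Pixton's relations of bidegree $(g,d)=(3,2)$. The two numerical conditions, with $g=3$ and $d=2$, force $\sum\sigma_i+\sum a_i\le 2$ and of the same parity as $3d-g-1=2$, hence $\sum\sigma_i+\sum a_i\in\{0,2\}$; since no positive part of~$\sigma$ and no index~$a_i$ is $\equiv 2\pmod 3$, any value $\le 2$ must equal~$1$. The admissible data are therefore exactly: $\sigma=\varnothing$ with all marked points undecorated; $\sigma=\varnothing$ with two points $i\ne j$ decorated by $a_i=a_j=1$; $\sigma=(1)$ with one point decorated by $a_i=1$; and $\sigma=(1,1)$ with all points undecorated. (There are no degree-$1$ Pixton relations in genus~$3$, so no degree-$2$ relation is obtained as a product of lower-degree ones.) This yields a list of $1+\binom n2+n+1=\binom n2+n+2$ relations, exactly the number needed to cut the monomial space down to rank~$n$; in particular the argument will also show these relations are linearly independent, and note that this count coincides with the number of relations listed in Example~1.

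Next I would write each relation out. Using $A=1+60T+27720T^2+\cdots$, $B=-1+84T+32760T^2+\cdots$, with $C_0=A$, $C_1=B$, and extracting the coefficient of~$T^2$, each relation becomes a concrete linear combination of the monomials above. Every coefficient is a universal rational number except for the effect of applying the operator~$\kappa$ to a~$K_0$, which produces $\kappa_0=2g-2+n=n+4$; such a~$K_0$ arises precisely from the constant term of a factor $\{C_1\}=\{B\}$. Consequently every Pixton relation of this bidegree has coefficients that are polynomials in~$n$ (of degree at most two).

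The remaining work, and the main obstacle, is the linear algebra of Step~3. I would exploit the block structure: the $\binom n2$ two-point relations carry the classes $\psi_i\psi_j$, the $n$ one-point relations carry the classes $\kappa_1\psi_i$, and the two undecorated relations, together with suitable sums of the previous ones, carry $\kappa_1^2$ and~$\kappa_2$. Forming differences $R_{ij}-R_{ik}$ and sums such as $\sum_j R_{ij}$ and $\sum_i R_i$, one peels off the monomials in the triangular order $\psi_i\psi_j$, then $\kappa_1\psi_i$, then $\kappa_1^2,\kappa_2$, expressing each in terms of $\psi_1^2,\dots,\psi_n^2$ and thereby recovering the four families of Example~1 with the transition coefficients read off directly. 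The delicate point is to keep everything uniform in~$n$ and, especially, to arrange the elimination so the transition coefficients come out \emph{polynomial}, not merely rational, in~$n$: a priori they are entries of the inverse of a $(\binom n2+n+2)$-square matrix with polynomial entries, hence rational functions with denominator its determinant, so one must organize enough triangularity that no growing matrix is genuinely inverted and the only determinant left to invert is a nonzero rational constant independent of~$n$. (Small values of~$n$, where some blocks are empty, are handled directly.) Once the elimination is set up this way, checking the four families of Example~1 is a routine, if lengthy, computation with the numerical coefficients of $A$ and~$B$.
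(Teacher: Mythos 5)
Your overall route is the same as the paper's: both arguments reduce the theorem to listing the finitely many Pixton relations in bidegree $(g,d)=(3,2)$, observing that their coefficients are polynomial in $n$ through $\kappa_0=n+4$, and then exhibiting each Buryak--Shadrin--Zvonkine relation of Example~1 as a combination of them with coefficients polynomial in~$n$; completeness of the Pixton family then follows from completeness of the BSZ family exactly as you say. So the framing and the reduction are fine.

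There are, however, two substantive points where your write-up falls short of a proof. First, your enumeration of the admissible data disagrees with the paper's: you list the four families $(\text{all zero})$, $(a_k=a_l=1)$, $(a_k=1,\ \sigma=\{1\})$, $(\sigma=\{1,1\})$, for a total of $\binom n2+n+2$ relations, whereas the paper works with five families, replacing your $\sigma=\{1,1\}$ by the two cases $a_k=2$ (one for each $k$, so $n$ relations) and $\sigma=\{2\}$, for a total of $\binom n2+2n+2$. The discrepancy is not cosmetic: the paper's explicit combinations producing $(b)_l$, $(c)$ and $(d)$ use the relations $(2)_k$ and $(3)$, which are absent from your list, and with only $\binom n2+n+2$ relations available your argument would in addition force them all to be linearly independent, a nontrivial claim you would have to verify. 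Before doing any linear algebra you must pin down which indexing convention is in force and check that every relation you use is genuinely in the family. Second, and more importantly, the entire content of the second sentence of the theorem --- that the transition coefficients are \emph{polynomial} in $n$ --- is exactly the step you defer. Your remark that one should ``organize enough triangularity that no growing matrix is genuinely inverted'' is the right heuristic, but it is precisely what has to be demonstrated; the paper does so by writing the combinations explicitly, e.g.
\begin{align*}
(a)_{k,l}&=(1)+\tfrac{3}{2}(2)_k+\tfrac{3}{2}(2)_l-4\,(5)_{k,l},\\
(c)&=\tfrac{7}{4}(1)-\tfrac{3}{16}(3)+\tfrac{3}{16}\sum_{i=1}^n(2)_i,
\end{align*}
and similarly for $(b)_l$ and $(d)$. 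Until you either produce such formulas for your own list of relations, or at least isolate the single $n$-independent determinant you claim controls the elimination and show it is nonzero, the theorem is not proved.
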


\begin{proof} Pixton's relations are labeled by $n$-tuples $a_1,\dots,a_n$ and a partition $\sigma$ satisfying the conditions $3d \geq g+1+\sum^n_{i=1}a_i+|\sigma|$ and $g+1+\sum_{i=1}^n a_i + \sum \sigma_i\equiv 0 \pmod 2$
. In our case, these conditions become
$$\left\{
\begin{array}{cl}
\sum_{i=1}^n a_i+\sum\sigma_i\leq 2\\
\sum_{i=1}^n a_i+\sum\sigma_i\equiv 0~~\mod 2
    \end{array}
\right.$$
This implies that there are 5 possibilities listed below.

\begin{enumerate}
\item all the $a_i$ are zero and $\sigma=\emptyset$;
\item $a_k=2$ for some $k$, the others $a_i$ are zero and $\sigma=\emptyset$;
\item all the $a_i$ are zero and $\sigma=\lbrace 2\rbrace$;
\item $a_k=1$ for some $k$, the other $a_i$ are zero and $\sigma=\lbrace 1\rbrace $;
\item $a_k=a_l=1$ for some $k$ and some $l$, the others $a_i$ are zero and $\sigma=\emptyset$.
\end{enumerate}

The corresponding Pixton's relations for these cases are:

$$
\begin{array}{ll}
(1) &
\displaystyle 35\sum_{i=1}^n\psi_i^2 + 6\sum_{i<j}\psi_i\psi_j - 6\sum_{i=1}^n\kappa_1\psi_i - 35\kappa_2 + 3\kappa_{1,1}=0,\\[2em]
(2)_k &
\displaystyle 35\sum_{i=1}^n\psi_i^2 -45\psi_k^2- 10\sum_{i=1,i\neq k}^n\psi_k\psi_i + 6\sum_{i<j}\psi_i\psi_j + 10\kappa_1\psi_k \\ & \displaystyle -6\sum_{i=1,i\neq k}^n\kappa_1\psi_i -35\kappa_2 + 3\kappa_{1,1}=0,\\[2em]
(3) &
\displaystyle 35\kappa_0\sum_{i=1}^n\psi_i^2 + 6\kappa_0\sum_{i<j}\psi_i\psi_j -10\sum_{i=1}^n\kappa_1\psi_i -6\sum_{i=1}^n\kappa_{0,1}\psi_i \\
 & -45\kappa_2 -35\kappa_{0,2}+ 10\kappa_{1,1}+ 3\kappa_{0,1,1}=0,\\[2em]
(4)_k &
\displaystyle 35\kappa_0\sum_{i=1,i\neq k}^n \psi_i^2 + 6\kappa_0\sum_{i,j\neq k}\psi_i\psi_j - 6\sum_{i=1,\neq k}^n\kappa_{0,1}\psi_i \\
& - 35\kappa_{0,2} + 3\kappa_{0,1,1}=0,\\[2em]
(5)_{k,l} &
\displaystyle 35\sum_{i=1,i\neq k,l}^n \psi_i^2 + 6\sum_{i,j\neq k,l}\psi_i\psi_j - 6\sum_{i=1,i\neq k,l}^n\kappa_1\psi_i\\
& - 35\kappa_2 + 3\kappa_{1,1}=0.
\end{array}
$$

Eliminating the zeroes in the indices of the $\kappa$-classes we get
$$
\begin{array}{ll}
(1) &
\displaystyle 35\sum_{i=1}^n\psi_i^2 + 6\sum_{i<j}\psi_i\psi_j - 6\sum_{i=1}^n\kappa_1\psi_i - 35\kappa_2 + 3\kappa_{1,1}=0,\\[2em]
(2)_k &
\displaystyle 35\sum_{i=1,i\neq k}^n\psi_i^2-45\psi_k^2-10\sum_{i=1,i\neq k}^n\psi_k\psi_i+6\sum_{i<j,i,j\neq k}\psi_i\psi_j\\
&+10\kappa_1\psi_k\displaystyle -6\sum_{i=1,i\neq k}^n\kappa_1\psi_i
-35\kappa_2+3\kappa_{1,1}=0,\\[2em]
(3) &
\displaystyle 35\kappa_0\sum_{i=1}^n\psi_i^2+ 6\kappa_0\sum_{i<j}\psi_i\psi_j -(16+6\kappa_0)\sum_{i=1}^n\kappa_1\psi_i \\
& \displaystyle -(80+35\kappa_0)\kappa_2+(16+3\kappa_0)\kappa_{1,1}=0,\\[2em]
(4)_k &
\displaystyle 35\kappa_0\sum_{i=1,i\neq k}^n \psi_i^2 +6\kappa_0\sum_{i,j\neq k}\psi_i\psi_j - 6(1+\kappa_0)\sum_{i=1,i\neq k}^n\kappa_1\psi_i  \\
& \displaystyle -35(1+\kappa_0)\kappa_2+ 3(2+\kappa_0)\kappa_{1,1}=0,\\[2em]
(5)_{k,l} &
\displaystyle 35\sum_{i=1,i\neq k,l}^n \psi_i^2 + 6\sum_{i,j\neq k,l}\psi_i\psi_j - 6\sum_{i=1,i\neq k,l}^n\kappa_1\psi_i\\
& - 35\kappa_2 + 3\kappa_{1,1}=0,
\end{array}
$$
or,  taking into account that $\kappa_0 = 2g-2+n = n+4$, 
$$
\begin{array}{ll}
(1) &
\displaystyle 35\sum_{i=1}^n\psi_i^2 + 6\sum_{i<j}\psi_i\psi_j - 6\sum_{i=1}^n\kappa_1\psi_i - 35\kappa_2 + 3\kappa_{1,1}=0,\\[2em]
(2)_k &
\displaystyle 35\sum_{i=1,i\neq k}^n\psi_i^2-45\psi_k^2-10\sum_{i=1,i\neq k}^n\psi_k\psi_i+6\sum_{i<j,i,j\neq k}\psi_i\psi_j\\
&+10\kappa_1\psi_k\displaystyle -6\sum_{i=1,i\neq k}^n\kappa_1\psi_i
-35\kappa_2+3\kappa_{1,1}=0,\\[2em]
(3) &
\displaystyle 35(n+4)\sum_{i=1}^n\psi_i^2+ 6(n+4)\sum_{i<j}\psi_i\psi_j -(6n+40)\sum_{i=1}^n\kappa_1\psi_i \\
& \displaystyle -(35n+220)\kappa_2+(3n+28)\kappa_{1,1}=0,\\[2em]
(4)_k &
\displaystyle 35(n+4)\sum_{i=1,i\neq k}^n \psi_i^2 +6(n+4)\sum_{i,j\neq k}\psi_i\psi_j\\
& - 6(n+5)\sum_{i=1,i\neq k}^n\kappa_1\psi_i \displaystyle -35(n+5)\kappa_2+ 3(n+6)\kappa_{1,1}=0,\\[2em]
(5)_{k,l} &
\displaystyle 35\sum_{i=1,i\neq k,l}^n \psi_i^2 + 6\sum_{i,j\neq k,l}\psi_i\psi_j - 6\sum_{i=1,i\neq k,l}^n\kappa_1\psi_i\\
& - 35\kappa_2 + 3\kappa_{1,1}=0.
\end{array}
$$

We keep the same definitions of the indices $k$ and $l$, for $g=3$, there are four different types of Buryak-Shadrin-Zvonkine's relations (see Example 1):

$$
\begin{array}{ll}
(a)_{k,l} & \displaystyle \psi_k\psi_l=\frac{5}{6}(\psi_k^2 + \psi_l^2),\\[2em]
(b)_l & \displaystyle \kappa_1\psi_l=\frac{5}{6}(n+5)\psi_l^2 + \frac56\sum_{i=1, i \not= l}^n\psi_i^2,\\[2em]
(c) & \displaystyle \kappa_2=\sum\psi_i^2,\\[2em]
(d) & \displaystyle \kappa_{1,1}=\frac{5}{3}(n+5)\sum\psi_i^2.\\
\end{array}
$$
We have the following operations giving them from Pixton's relations
\begin{align*}
(a)_{k,l}&=(1)+\frac{3}{2} \cdot(2)_k+\frac32 \cdot (2)_l-4 \cdot (5),\\
(b)_l&=-\frac{2n+1}{3}\cdot (1)-(n-2) \cdot (2)_l-\sum_{i=1,i\neq l}^n(2)_i+\frac{8}{3}\sum_{i=1,i\neq l}^n(5)_{l,i},\\
(c)&=
\frac74 \cdot (1)-\frac{3}{16} \cdot (3)+\frac{3}{16}\sum_{i=1}^n(2)_i,\\
(d)&=
-\frac{2n^2+4n +33}3 \cdot (1)-\frac{8n-9}4 \sum_{i=1}^n(2)_i+\frac{7}{4} \cdot (3)\\
&+\frac{16}{3}\sum_{i<j}(5)_{i,j}.
\end{align*}

Thus we obtained BSZ's relations as linear combinations of Pixton's relations with polynomial coefficients. In particular, this proves that Pixton's relations form a complete family in genus 3.
\end{proof}

\section{The tautological ring in genus 4} 

\begin{prop}
We have
\begin{align*}
R^0(\cM_{4,n}) &= \Q,\\
R^1(\cM_{4,n}) &= \Q \langle \kappa_1, \psi_1, \dots, \psi_n \rangle,\\
R^3(\cM_{4,n}) &= \Q \langle \psi_1^3, \dots, \psi_n^3 \rangle,\\
R^d(\cM_{4,n}) &= 0 \quad \mbox{ for } d \geq 3.
\end{align*}
The Buryak-Shadrin-Zvonkine relations listed below form a complete family of relations is degree~3:
$$
\begin{array}{ll}
\kappa_3 & \displaystyle =  \;\; \sum_{i=1}^n \psi_i^3, \\[1.4em]
\kappa_{2,1} & \displaystyle = \;\; \frac73(n+7) \sum_{i=1}^n \psi_i^3,\\[1.4em]
\kappa_{1,1,1} & \displaystyle =\;\; \frac{35}9(n+7)(n+8) \sum_{i=1}^n \psi_i^3, \\[1.4em]
\kappa_2 \psi_k & \displaystyle =\;\; \frac79(n+8) \psi_k^3 + \frac{14}9 \sum_{i\not= k}^n \psi_i^3,\\[1.4em]
\kappa_{1,1} \psi_k & \displaystyle =\;\; \frac{35}{27}(n+7)(n+8) \psi_k^3+
\frac{70}{27} (n+7) \sum_{i\not= k} \psi_i^3,\\[1.4em]
\kappa_1 \psi^2_k & \displaystyle =\;\; \frac79(2n+13) \psi_k^3 + \frac79 \sum_{i\not= k}^n \psi_i^3,\\[1.4em]
\kappa_1 \psi_k \psi_l & \displaystyle =\;\; \frac{35}{27}(n+7) (\psi_k^3 + \psi_l^3) + \frac{35}{27} \sum_{i\not= k,l}^n \psi_i^3,\\[1.4em]
\psi_k^2 \psi_l & \displaystyle =\;\; \frac{14}9 \psi_k^3 + \frac79 \psi_l^3,\\[1.4em]
\psi_k \psi_l \psi_p& \displaystyle =\;\; \frac{35}{27}( \psi_k^3 + \psi_l^3+\psi_p^3).\\
\end{array}
$$
\end{prop}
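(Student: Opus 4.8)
The plan is to follow the same pattern as the proof of the proposition for genus $\le 3$: every group appearing in the statement is pinned down by one of the results already recalled — Ionel's vanishing, Mumford's stability, or the socle and top intersection theorem of Buryak--Shadrin--Zvonkine — so that, apart from the elementary substitutions needed to read off the explicit relations, there is nothing to compute. (The one genuinely difficult group, $R^2(\cM_{4,n})$, is not part of this statement; it is handled separately, and that is where the matrix computation mentioned in the introduction takes place.)

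First, $R^d(\cM_{4,n}) = 0$ for $d \ge 4$ is Ionel's vanishing $R^d = 0$ for $d \ge g$ \cite{Ion1} applied with $g = 4$, and $R^0(\cM_{4,n}) = \Q$ for every moduli space of curves. For $R^1(\cM_{4,n})$: the tautological ring is generated by $\psi_1, \dots, \psi_n$ and the classes $\kappa_1, \kappa_2, \dots$, so in degree $1$ the only generators are $\kappa_1, \psi_1, \dots, \psi_n$; since $1 < g/3 = 4/3$, Mumford's stability \cite{MadWei} guarantees that these $n+1$ classes satisfy no relation in degree $1$, whence $R^1(\cM_{4,n}) = \Q\langle \kappa_1, \psi_1, \dots, \psi_n\rangle$ has rank $n+1$.

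For $R^3(\cM_{4,n}) = R^{g-1}(\cM_{4,n})$, the socle statement of \cite{BSZ} gives that the rank is $n$ and that $\psi_1^3, \dots, \psi_n^3$ is a basis, which is exactly the displayed description. It then remains only to identify the nine listed relations as instances of the top intersection formula recalled above, specialized to $g = 4$ (so $g-1 = 3$): each is the value of a degree-$3$ monomial $\prod_{i}\psi_i^{d_i}\,\kappa_{k_1,\dots,k_m}$ with $\sum_i d_i + \sum_j k_j = 3$. For instance $m = 1$, $k_1 = 3$, all $d_i = 0$ gives $\kappa_3 = \frac{7!!}{7!!}\cdot\frac{(n+6)!}{(n+6)!}\sum_{i=1}^n \frac{3}{3}\psi_i^3 = \sum_{i=1}^n \psi_i^3$; and $m = 0$ with $d_k = d_l = d_p = 1$ for distinct $k,l,p$ gives $\psi_k\psi_l\psi_p = \frac{7!!}{(3!!)^3}\cdot\frac{(n+5)!}{(n+6)!}\sum_{i}\frac{(n+6)d_i}{3}\psi_i^3 = \frac{35}{27}(\psi_k^3 + \psi_l^3 + \psi_p^3)$. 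The remaining seven relations follow by the same substitution and are completely parallel to Example~1 in genus $3$.

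The only point demanding any attention is completeness: one verifies that the nine monomials listed (up to the $S_n$-action permuting the marked points) are all the degree-$3$ monomials in $\kappa_1, \dots$ and $\psi_1, \dots, \psi_n$ other than the $\psi_i^3$ themselves, so the listed relations already express every degree-$3$ monomial as a combination of $\psi_1^3, \dots, \psi_n^3$; since by \cite{BSZ} the span of the latter has rank exactly $n$, no further independent relation can exist and the family is complete. I do not expect a serious obstacle at any step of this argument — the statement is a direct consequence of the cited theorems together with the bookkeeping of the top intersection formula; the difficulty in genus $4$ is concentrated entirely in degree~$2$, which lies outside this proposition.
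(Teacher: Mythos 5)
Your proposal is correct and follows essentially the same route as the paper, which likewise disposes of each group by citing Ionel's vanishing, Mumford's stability for degree $1$, and the Buryak--Shadrin--Zvonkine socle and top intersection results for degree $g-1=3$; you merely spell out the substitutions into the top intersection formula that the paper leaves implicit. No gap.
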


\begin{proof}
The vanishing for $d \geq g$ is known by~\cite{Ion1}. The rank and the relations in $R^{g-1}(\cM_{g,n})$ are known by~\cite{BSZ}. By Mumford's stability~\cite{MadWei} there are no relations in $R^1(\cM_{4,n})$.
\end{proof}

It remains to study the group $R^2(\cM_{4,n})$. This group is spanned by the tautological classes $\kappa_2,\kappa_{1,1},\psi_i\kappa_1,\psi_i^2, \psi_i\psi_j$, where $i$ and $j$ are integers from $1$ to $n$. 

\begin{thm}
The classes $\psi_i\psi_j$, $1 \leq i < j \leq n$, $\psi_i^2$, $1 \leq i \leq n$, and $\kappa_{1,1}$ form a basis of $R^2(\cM_{4,n})$.
\end{thm}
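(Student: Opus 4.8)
The plan is to prove the two halves of the statement separately: that the listed classes span $R^{2}(\cM_{4,n})$, and that they are linearly independent. The span will follow from Pixton's relations restricted to $\cM_{4,n}$, giving the bound $r^{2}_{4}(n)\le\tfrac{n(n+1)}{2}+1$; independence will follow from pairing $R^{2}(\cM_{4,n})$ with $R^{1}(\cM_{4,n})=\Q\langle\kappa_{1},\psi_{1},\dots,\psi_{n}\rangle$ into the top group $R^{3}(\cM_{4,n})=\Q\langle\psi_{1}^{3},\dots,\psi_{n}^{3}\rangle$ and invoking the Buryak--Shadrin--Zvonkine top intersection formulas listed in the preceding Proposition, giving the matching lower bound.

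\textbf{Spanning.} By the remark preceding the theorem, $R^{2}(\cM_{4,n})$ is generated by $\kappa_{2}$, $\kappa_{1,1}$, the $n$ classes $\kappa_{1}\psi_{i}$, the $n$ classes $\psi_{i}^{2}$, and the $\binom{n}{2}$ classes $\psi_{i}\psi_{j}$, so to reach the claimed basis it suffices to express $\kappa_{2}$ and the $\kappa_{1}\psi_{i}$ in terms of $\kappa_{1,1},\psi_{i}^{2},\psi_{i}\psi_{j}$. For $g=4$ and degree $d=2$ the admissibility conditions for Pixton's relations, $3d\ge g+1+\sum a_{i}+\sum\sigma_{i}$ and $3d\equiv g+1+\sum a_{i}+\sum\sigma_{i}\pmod 2$, reduce to $\sum a_{i}+\sum\sigma_{i}=1$; hence there are exactly $n+1$ relations, namely the one with $\sigma=\{1\}$ and all $a_{i}=0$, and, for each $k$, the one with $a_{k}=1$, the other $a_{j}=0$ and $\sigma=\emptyset$. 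Writing out their degree-$2$ parts exactly as in the proof of Theorem~2, now with $\kappa_{0}=2g-2+n=n+6$, turns the task into a square inhomogeneous linear system with unknowns $\kappa_{2},\kappa_{1}\psi_{1},\dots,\kappa_{1}\psi_{n}$; by $S_{n}$-symmetry its $(n+1)\times(n+1)$ coefficient matrix is ``scalar plus rank one'' in the $\kappa_{1}\psi_{i}$-block, so its determinant is an explicit polynomial in $n$, which I would check never vanishes for $n\ge1$. Solving the system then writes $\kappa_{2}$ and each $\kappa_{1}\psi_{i}$ as explicit combinations of $\kappa_{1,1},\psi_{i}^{2},\psi_{i}\psi_{j}$, so these $\tfrac{n(n+1)}{2}+1$ classes span $R^{2}(\cM_{4,n})$.

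\textbf{Independence.} Suppose $\sum_{i}\alpha_{i}\psi_{i}^{2}+\sum_{i<j}\beta_{ij}\psi_{i}\psi_{j}+\gamma\kappa_{1,1}=0$ in $R^{2}(\cM_{4,n})$. Multiplying by each $\psi_{k}$ and by $\kappa_{1}$ produces elements of $R^{3}(\cM_{4,n})$, and the genus-$4$ top intersection formulas of the preceding Proposition rewrite every product $\psi_{i}^{2}\psi_{k}$, $\psi_{i}\psi_{j}\psi_{k}$, $\kappa_{1,1}\psi_{k}$, $\kappa_{1}\psi_{i}^{2}$, $\kappa_{1}\psi_{i}\psi_{j}$, $\kappa_{1}\kappa_{1,1}$ in the basis $\psi_{1}^{3},\dots,\psi_{n}^{3}$. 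This yields a homogeneous linear system in the $\tfrac{n(n+1)}{2}+1$ unknowns $(\alpha_{i},\beta_{ij},\gamma)$, recorded by a matrix of size $(\tfrac{n(n+1)}{2}+1)\times(n^{2}+1)$ (the $+1$ column being the $S_{n}$-invariant coordinate of the $\kappa_{1}$-pairing), and the point is that it has full row rank for every $n$. I would organize this along the $S_{n}$-action: as an $S_{n}$-representation one has, for $n\ge4$, $R^{2}(\cM_{4,n})\cong 3\,S^{(n)}\oplus 2\,S^{(n-1,1)}\oplus S^{(n-2,2)}$ (the permutation module on marked points for the $\psi_{i}^{2}$, the permutation module on pairs for the $\psi_{i}\psi_{j}$, and a trivial summand for $\kappa_{1,1}$), the pairing is equivariant, so the rank question splits into a $3\times3$ determinant on the trivial-isotypic part, a rank-two condition on the two-fold standard part, and a single nonvanishing on the $S^{(n-2,2)}$ part, each entry being a polynomial in $n$. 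Concretely this is where one extracts a well-chosen square submatrix, identifies its eigenvectors (those supported on the trivial and $S^{(n-2,2)}$ components) and its family of invariant planes (one inside each copy of $S^{(n-1,1)}$), shows the $2\times2$ determinant on each plane is a nonzero polynomial in $n$, and treats separately the small values of $n$ (notably $n\le3$, where $S^{(n-2,2)}$ vanishes) for which the generic pattern degenerates or an invariant plane accidentally contains an eigenvector. Nondegeneracy of every block forces $(\alpha_{i},\beta_{ij},\gamma)=0$, and together with the spanning bound this gives $r^{2}_{4}(n)=\tfrac{n(n+1)}{2}+1$ and the claimed basis.

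\textbf{Main obstacle.} The real difficulty is the rank computation in the independence step. Unlike the genus-$3$ case, here the standard representation occurs with multiplicity two, so one is forced to control a whole family of $2\times2$ determinants (together with one $3\times3$ determinant) and to verify they are nonzero \emph{as polynomials in} $n$, and then to dispose of the finitely many small $n$ by hand. The spanning step, being a single symmetric $(n+1)\times(n+1)$ system, is comparatively routine.
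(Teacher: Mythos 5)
Your overall strategy is the paper's: Pixton's relations with $\sum a_i+|\sigma|=1$ yield an $(n+1)\times(n+1)$ system eliminating $\kappa_2$ and the $\kappa_1\psi_i$ (the paper row-reduces the very "scalar plus rank one" matrix you describe and finds determinant $(-24)^n(156-\tfrac{31}{22}n)$), and independence is proved by a pairing whose nondegeneracy is analyzed block by block along the $S_n$-action. Your implementation of the pairing differs in detail: you multiply into $R^3(\cM_{4,n})$ and use the Buryak--Shadrin--Zvonkine formulas, i.e.\ the rectangular $(\tfrac{n(n+1)}2+1)\times(n^2+1)$ matrix of the introduction, whereas the paper works directly with a square matrix of intersection numbers $M_{\alpha\beta}=\int_{\oM_{4,n}}\alpha\beta$, with $\alpha$ running over $\lambda_4\lambda_3\prod_i\psi_i$, $\lambda_4\lambda_3\kappa_1\prod_{i\ne k}\psi_i$, $\lambda_4\lambda_3\psi_k^2\prod_{i\ne k,l}\psi_i$, computed via the pushforward formula for $\lambda_g\lambda_{g-1}$-integrals. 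Your representation-theoretic bookkeeping ($3S^{(n)}\oplus 2S^{(n-1,1)}\oplus S^{(n-2,2)}$, hence a $3\times3$ block on invariants, a multiplicity-two standard block, and a scalar on $S^{(n-2,2)}$) is a clean conceptual account of exactly what the paper does by hand with its eigenvectors $w_{ijkl},t_{ijkl}$ (eigenvalue $-4$, accounting for the $\tfrac{n(n-3)}2$-dimensional $S^{(n-2,2)}$ part), its invariant planes $\langle u_i,v_i\rangle$ (the doubled standard part), and its three-dimensional complement $\langle a,b,c\rangle$.

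That said, there are genuine gaps. First, you defer every decisive computation, and for this theorem the computations \emph{are} the proof: the paper's conclusion is $\det\hM=\prod_{i=1}^{n-1}\bigl(-16(n-i+6)\bigr)\cdot(-4)^{n(n-3)/2}\cdot(-32)(n+6)(2n+15)$, and nothing short of producing such explicit values certifies nonvanishing. Second, "nonzero as a polynomial in $n$" is not the right test: each block determinant is a polynomial with specific roots, and one must check that none of these roots is an admissible integer (the paper checks that $156-\tfrac{31}{22}n$ vanishes only at a non-integer, that $n-i+6$ vanishes only at $i=n+6>n-1$, and that $(n+6)(2n+15)$ has no roots in $\N$); a determinant equal to, say, $n-100$ would pass your test and still kill the argument at $n=100$, and such failures need not occur only at "small $n$". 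Third, and most seriously, the degeneration you mention in passing --- an invariant plane accidentally containing an eigenvector --- is not a small-$n$ phenomenon: in the paper it occurs for every $n\equiv 2\pmod 8$, where the plane $\langle u_{3m+1},v_{3m+1}\rangle$ acquires a $-4$-eigenvector, the vectors $u,v,w,t$ then fail to span the codimension-$3$ subspace, and the block triangularization collapses. The paper repairs this with an additional vector $z$ for which $z+\tfrac{m}{5m+7}u_{3m+1}$ is annihilated by $(\hM+4)^2$ but not by $\hM+4$; a plan that only envisages treating finitely many exceptional values of $n$ by hand cannot supply this infinite family of repairs, so this step needs to be built into the argument rather than relegated to a remark.
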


The proof of this theorem is the goal of this section.

\subsection{Pixton's relations in genus~4}

Pixton's relations between these classes are determined by integers $a_1,\dots,a_n$ and a partition $\sigma$ such that
$$
3d \geq g+1+\sum_{i=1}^na_i+|\sigma|
$$ 
and the left-hand side has the same parity as the right-hand side. In our case these conditions boil down to
$$
\sum_{i=1}^na_i+|\sigma| = 1.
$$
Thus we have the following possibilities
\begin{itemize}
\item $\sigma=\{ 1\}$ and $a_i=0$ for all $i$.
\item $\sigma=\emptyset$, $a_k=1$ for some $1\leq k\leq n$ and $a_i=0$ for $i\neq k$.
\end{itemize}

In the first case, the relation is 
\begin{align*}
\Big[\kappa(\exp(\{ -60T-27720T^2\})&\cdot\{ -1+84T+32760T^2\})\\
&\cdot\prod_{i=1}^n(1+60\psi_iT+27720\psi_i^2T^2)\Big]_{T^2}
\end{align*}
which gives
\begin{align*}
&\Big[\kappa(exp(1-60K_1T-27720K_2T^2+1800K_1^2T^2) \\
&\cdot(-K_0+84K_1T+32760K_2T^2))\cdot\prod_{i=1}^n(1+60\psi_iT+27720\psi_i^2T^2)\Big]_{T^2}.&
\end{align*}
From this we obtain
\begin{align*}
&\Big[(-\kappa_0+(84+60(1+\kappa_0))\kappa_1T+(32760+27720(1+\kappa_0))\kappa_2T^2\\
&-(5040+1800(2+\kappa_0))\kappa_{1,1}T^2)\cdot\prod_{i=1}^n(1+60\psi_iT+27720\psi_i^2T^2)\Big]_{T^2}.&
\end{align*}
Extracting the coefficient of  degree $2$ and dividing it by $360$ we get
\begin{align*}
&(630-77\kappa_0)\kappa_2-(24+5\kappa_0)\kappa_{1,1}-77\kappa_0\sum_{i=1}^n\psi_i^2\\
&+(24+10\kappa_0)\sum_{i=1}^n\kappa_1\psi_i-10\kappa_0\sum_{i<j}\psi_i\psi_j.
\end{align*}
We see that the coefficients depend on $n$ since $\kappa_0=2g-2+n$\\
$=6+n$.\\

In the second case, if $\sigma$ is empty, one of the $a_i$'s is 1 and the others are zero we get the following relation
\begin{align*}
\Big[\kappa(\exp(\{ -60T-27720T^2\}))\cdot(-1+84\psi_kT+32760\psi_k^2T^2)&\\
\cdot\prod_{i=1,i\neq k}^n(1+60\psi_iT+27720\psi_i^2T^2)\Big]_{T^2}&
\end{align*}
which gives 
\begin{align*}&\Big[(1-60\kappa_1T-27720\kappa_2T^2+1800\kappa_{1,1}T^2)\\
&\cdot(-1+84\psi_kT+32760\psi_k^2T^2)\cdot\prod_{i=1,i\neq k}^n(1+60\psi_iT+27720\psi_i^2T^2)\Big]_{T^2}.&
\end{align*}
Extracting the coefficient of $T^2$ and dividing by $360$, we get
\begin{align*}
77\kappa_2-5\kappa_{1,1}+91\psi_k^2-77\sum_{i=1,i\neq k}^n\psi_i^2-14\kappa_1\psi_k+10\sum_{i=1,i\neq k}^n\kappa_1\psi_i\\
+14\sum_{i=1,i\neq k}^n\psi_k\psi_i-10\sum_{i<j,i,j\neq k}\psi_i\psi_j.
\end{align*}

\subsection{Upper bound for the dimension}
 Let's write the matrix whose lines are composed by the coefficients of the classes $\kappa_2,\kappa_1\psi_1,...,\kappa_1\psi_n$ in that order in the relations in the same order as previous. We see that, in the first line of this matrix, the coefficient of $\kappa_2$ can be written $1092+77n$ and that the coefficients $\kappa_1\psi_i$ can be written $84+10n$. For a fixed integer,$n$ the matrix will be
$$
\begin{pmatrix}
 1092+77n & 84+10n & \cdots & \cdots & 84+10n \\
77        & -14    & 10     & \cdots & 10     \\
\vdots    & 10     & \ddots & \ddots & \vdots \\
\vdots    & \vdots & \ddots & \ddots & 10     \\
77        & 10     & \dots  &  10    &-14
\end{pmatrix}
$$

We can divide the first column by $7$ to obtain the following matrix
$$
\begin{pmatrix}
 156+11n & 84+10n & \cdots & \cdots & 84+10n \\
11        & -14    & 10     & \cdots & 10     \\
\vdots    & 10     & \ddots & \ddots & \vdots \\
\vdots    & \vdots & \ddots & \ddots & 10     \\
11        & 10     & \dots  &  10    &-14
\end{pmatrix}
$$

Adding the first column multiplied by $\displaystyle{-\frac{10}{11}}$ to all the other columns we obtain 
$$
\begin{pmatrix}
 156+11n & -\frac{636}{11} & \cdots & \cdots & -\frac{636}{11}    \\
11        & -24              & 0      & \cdots & 0                  \\
\vdots    & 0               & \ddots & \ddots & \vdots             \\
\vdots    & \vdots          & \ddots & \ddots & 0                  \\
11        & 0               & \dots  & 0      & -24
\end{pmatrix}
$$

Now we add all lines but the fisrt multiplied by $\displaystyle{-\frac{53}{22}}$ to the first line we get the following matrix
$$
\begin{pmatrix}
 156-\frac{31}{22}n& 0      & \cdots & \cdots & 0      \\
11                  & -24     & \ddots &        & \vdots \\
\vdots              & 0      & \ddots & \ddots & \vdots \\
\vdots              & \vdots & \ddots & \ddots & 0      \\
11                  & 0      & \dots  & 0      & -24
\end{pmatrix}
$$

The determinant of this matrix, $\displaystyle{(-24)^n\cdot(156-\frac{31}{22}n)}$, which only vanishes for $\displaystyle{n=\frac{31\cdot 156}{22}}\notin\N$, hence can not vanish for any integer $n$, this matrix is then invertible for all $n$ and in the system of $n+1$ relations thus the classes $\kappa_2,\kappa_1\psi_1,...,\kappa_1\psi_n$ can be expressed in terms of the classes $\kappa_{1,1}$ and $\psi_i\psi_j$, for $1\leq i,j\leq n$.

\subsection{The matrix M}
We want to show that the classes $\kappa_{1,1},\psi_i^2,\psi_i\psi_j$ don't have any relation between them. We consider now the classes $\lambda_4\lambda_3\prod_{i=1}^n\psi_i$, $\lambda_4\lambda_3\kappa_1\prod_{i=1,i\neq k}^n\psi_i$, for integers $k$ such that $1\leq k\leq n$ , and $\lambda_4\lambda_3\psi_k^2\prod_{i=1,i\neq k,l}^n\psi_i$, where $1\leq k<l\leq n$.\\

\begin{prop}
These classes are defined on $\oM_{4,n}$ and vanish on its boundary.
\end{prop}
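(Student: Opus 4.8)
The plan is to verify the two claims separately: first that each of the listed classes extends to a well-defined class on $\oM_{4,n}$, and then that this extension restricts to zero on the boundary divisor $\partial\oM_{4,n} = \oM_{4,n} \setminus \cM_{4,n}$. For the first claim I would simply note that the building blocks $\psi_i$, $\kappa_1$ and the Hodge Chern classes $\lambda_k$ are all defined on $\oM_{4,n}$ (indeed this is how they were introduced in the Preliminaries: $\psi_i$ as a first Chern class on $\oM_{g,n}$, $\kappa_k = \pi_*(\psi_{n+1}^{k+1})$ with $\pi$ the forgetful map on the compactified spaces, and $\lambda_k$ as Chern classes of the Hodge bundle on $\oM_{g,n}$), so any polynomial in them — in particular each of $\lambda_4\lambda_3\prod_i\psi_i$, $\lambda_4\lambda_3\kappa_1\prod_{i\neq k}\psi_i$, and $\lambda_4\lambda_3\psi_k^2\prod_{i\neq k,l}\psi_i$ — is a bona fide class on $\oM_{4,n}$, and restricting it to the open part recovers the stated class on $\cM_{4,n}$.

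The substantive point is the vanishing on the boundary. The key input is that $\lambda_g\lambda_{g-1} = \lambda_4\lambda_3$ vanishes on every boundary divisor of $\oM_{g,n}$ in genus $g=4$: on a boundary stratum the Hodge bundle splits (up to the contribution of the node) as a direct sum of the Hodge bundles of the components, so on the locus of curves with a non-separating node the rank-$g$ bundle contains a trivial sub-line-bundle and hence $\lambda_g$ restricts to zero, while on the locus of curves with a separating node of type $(h, g-h)$ one has $\lambda_g\lambda_{g-1}$ restricting to a class pulled back from $\cdots$ and killed because $\lambda_h^{\vphantom{2}}$ and $\lambda_{g-h}$ satisfy $\lambda_h^2 = 0$, $\lambda_{g-h}^2=0$ and a degree count forces $\lambda_g\lambda_{g-1}$ into a product where one factor is squared; this is Mumford's relation and is exactly the ``$\lambda_g\lambda_{g-1}$ vanishes on $\partial\oM_{g,n}$'' statement underlying the socle/top-intersection theory quoted above. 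Since every class in our list is $\lambda_4\lambda_3$ times something, each restricts to $0$ on each boundary divisor, hence on all of $\partial\oM_{4,n}$.

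So the steps, in order, are: (i) observe $\psi_i,\kappa_1,\lambda_k$ all live on $\oM_{4,n}$, so the three families of classes are defined there and restrict to the named classes on $\cM_{4,n}$; (ii) recall the boundary divisors $D_{\mathrm{irr}}$ and $D_{h,S}$ of $\oM_{4,n}$ and the behavior of the Hodge bundle under restriction to each; (iii) deduce $\lambda_4\lambda_3|_{D} = 0$ for every boundary divisor $D$, treating the non-separating case (trivial sub-bundle kills $\lambda_4$) and the separating case (Mumford-type relation $\lambda_h^2=0$ forces $\lambda_4\lambda_3=0$ after the degree count) separately; (iv) multiply by the remaining (boundary-restricted) factor to conclude each listed class vanishes on $D$, and since the $D$'s cover $\partial\oM_{4,n}$ we are done. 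I expect step (iii), the separating-node case, to be the only place requiring care — one must check the degree bookkeeping so that the restriction genuinely lands in an ideal generated by a square of some $\lambda$; the non-separating case and the extension claim are routine.
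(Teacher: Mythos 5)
There is a genuine gap in step (iii): the claim that $\lambda_4\lambda_3$ restricts to zero on \emph{every} boundary divisor of $\oM_{4,n}$ is false. Your Mumford-relation argument for a separating node of type $(h,g-h)$ uses $\lambda_h^2=0$ and $\lambda_{g-h}^2=0$ on the two branches, and this works only when both $h\geq 1$ and $g-h\geq 1$. On a rational-tails divisor $D_{0,S}$ (separating node with a genus-$0$ branch carrying the markings in $S$, $|S|\geq 2$), the Hodge bundle is pulled back entirely from the genus-$4$ factor, $\lambda_0=1$, and $\lambda_4\lambda_3$ restricts to the (nonzero) class $\lambda_4\lambda_3$ of $\oM_{4,\,n-|S|+1}$. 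So $\lambda_4\lambda_3$ alone does not kill these divisors, and your argument, which never uses the $\psi$-factor for the vanishing, does not cover them.

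This is exactly the case the paper's proof is designed for: $\lambda_4\lambda_3$ handles all boundary strata \emph{except} those with a separating node and a genus-$0$ branch, and on those the remaining factor does the work. On $D_{0,S}$ the restriction of $\prod_{i=1}^n\psi_i$ (and likewise of $\kappa_1\prod_{i\neq k}\psi_i$ and $\psi_k^2\prod_{i\neq k,l}\psi_i$) contains the product $\prod_{i\in S}\psi_i$ of $|S|$ classes pulled back from the genus-$0$ factor $\oM_{0,|S|+1}$, which has dimension $|S|-2$; this product vanishes for degree reasons, hence so does the whole class. Your step (i), the extension to $\oM_{4,n}$, and your treatment of the non-separating divisor and of the separating divisors with both genera positive are fine; you need to add the rational-tails argument above to close the proof.
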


\begin{proof}
The part $\lambda_4\lambda_3$ makes these classes vanish on the whole boundary except on the curves having a separating node whith a branch of genus $0$. The vanishing on these strata comes from the vanishing of $\prod_{i=1}^n\psi_i$, $\kappa_1\prod_{i=1,i\neq k}^n\psi_i$ and $\psi_k\prod_{i=1,i\neq l}^n\psi_i$ on $\oM_{0,n+1}$.
\end{proof}

In the following, we will use the following notations
\begin{itemize}
\item $\Psi_{\emptyset}:=\prod_{i=1}^n\psi_i$.
\item For $k$ a positive integer such that $1\leq k\leq n$, we write $\Psi_k:=\prod_{i=1,i\neq k}^n\psi_i$.
\item For $k,l$ distinct positive integers such that $1\leq k<l\leq n$, we write $\Psi_{\{ k,l\}}:=\psi_k^2\prod_{i=1,i\neq k,l}\psi_i$.
\end{itemize}

We build now the matrix $M$ composed by the intersection numbers given by the products classes of the two lists.

$$\lambda_4\lambda_3\Psi_{\emptyset}~~~~\lambda_4\lambda_3\kappa_1\Psi_1~\cdots~\lambda_4\lambda_3\kappa_1\Psi_n~~~~\lambda_4\lambda_3\Psi_{\{ 1,2\}}~\cdots~\lambda_4\lambda_3\Psi_{\{ n-1,n\}}~~~~~~$$

$$
\begin{pmatrix}
 &~~~~~~~~~~~~~~~ &~~~~~~~~~~~~&~~~~ &~~~~~~&~~~~ &~~~~~~~~~~ &~~~~~~~~& \\
 &~~~~~~~~~~~~~~~ &~~~~~~~~~~~~&~~~~ &~~~~~~&M_{\alpha\beta} &~~~~~~~~~&~~~~~~~~~~ & \\
 &~~~~~~~~~~~~~~~ &~~~~~~~~~~~~&~~~~ &~~~~~~&~~~~ &~~~~~~~~~~ &~~~~~~~~& \\
 &~~~~~~~~~~~~~~~ &~~~~~~~~~~~~&~~~~ &~~~~~~&~~~~ &~~~~~~~~~~ &~~~~~~~~& \\
 &~~~~~~~~~~~~~~~ &~~~~~~~~~~~~&~~~~ &~~~~~~&~~~~ &~~~~~~~~~~ &~~~~~~~~& \\
 &~~~~~~~~~~~~~~~ &~~~~~~~~~~~~&~~~~ &~~~~~~&~~~~ &~~~~~~~~~~ &~~~~~~~~& \\
 &~~~~~~~~~~~~~~~ &~~~~~~~~~~~~&~~~~ &~~~~~~&~~~~ &~~~~~~~~~~ &~~~~~~~~& \\
 &~~~~~~~~~~~~~~~ &~~~~~~~~~~~~&~~~~ &~~~~~~&~~~~ &~~~~~~~~~~ &~~~~~~~~& 
\end{pmatrix}
\begin{matrix*}[l]
\kappa_{1,1}\\
\psi_1^2\\
\vdots\\
\psi_n^2\\
\psi_1\psi_2\\
\vdots\\
\psi_{n-1}\psi_n
\end{matrix*}$$

where $M_{\alpha\beta}=\displaystyle{\int_{\oM_{4,n}}\alpha\cdot\beta}$. We are able to calculate these integrals using the following result.

\begin{prop}
Let $\pi:\cM_{g,n}^{rt}\rightarrow\cM_g$ be the forgetful map forgetting all marked points and let $d_1,...,d_n,k_1,...,k_m$ be nonnegative integers. Then in $R^{g-2}(\cM_g)$, we have
$$\pi_*(\kappa_{k_1,...,k_m}\prod_{i=1}^n\psi_i^{d_i+1})=\frac{(2g-3+n+m)!(2g-3)!!}{(2g-2)!\prod_{i=1}^n(2d_i+1)!!\prod_{i=1}^m(2k_i+1)!!}\kappa_{g-2}.$$
\end{prop}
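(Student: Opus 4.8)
The plan is to reduce the left–hand side to a single multi-index $\kappa$-class on $\cM_g$ and then invoke the top intersection property without marked points. Put $N=n+m$ and let $q\colon\cM_{g,N}^{rt}\to\cM_{g,n}^{rt}$ be the map forgetting the points $n+1,\dots,n+m$, so that by definition $\kappa_{k_1,\dots,k_m}=q_*(\psi_{n+1}^{k_1+1}\cdots\psi_{n+m}^{k_m+1})$ on $\cM_{g,n}^{rt}$; note also that the total forgetful map $\cM_{g,N}^{rt}\to\cM_g$ is $\pi\circ q$. By the projection formula,
$$
\pi_*\left(\kappa_{k_1,\dots,k_m}\prod_{i=1}^n\psi_i^{d_i+1}\right)
=(\pi\circ q)_*\left(\psi_{n+1}^{k_1+1}\cdots\psi_{n+m}^{k_m+1}\cdot q^*\prod_{i=1}^n\psi_i^{d_i+1}\right).
$$
Since $\pi\circ q$ forgets all $N$ marked points and all the exponents occurring are $\ge 1$, the right–hand side would be exactly $\kappa_{d_1,\dots,d_n,k_1,\dots,k_m}$ (again by the definition of the $\kappa$-classes) provided one could replace $q^*\psi_i$ by $\psi_i$ for $1\le i\le n$. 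So the entire content of the proof is to check that the correction terms in the comparison formula for $q^*\psi_i$ are annihilated after multiplication by $\psi_{n+1}^{k_1+1}\cdots\psi_{n+m}^{k_m+1}$. (For the statement to make sense the degrees must satisfy $\sum d_i+\sum k_j=g-2$, which we therefore assume.)

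To carry this out I would use the standard comparison $q^*\psi_i=\psi_i-\sum_{\emptyset\neq J\subseteq\{n+1,\dots,n+m\}}\delta_{0,\{i\}\cup J}$ on $\cM_{g,N}^{rt}$, where $\delta_{0,S}$ denotes the class of the locus of curves having a genus-$0$ component that carries exactly the marked points in $S$ and meets the genus-$g$ part in a single node. Expanding $\prod_i(q^*\psi_i)^{d_i+1}$, every monomial other than $\prod_i\psi_i^{d_i+1}$ is divisible by some $\delta_{0,\{i_0\}\cup J_0}$ with $\emptyset\neq J_0\subseteq\{n+1,\dots,n+m\}$, hence by the projection formula equals $\iota_*\iota^*(-)$ for the inclusion $\iota$ of that divisor. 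Under the isomorphism $\delta_{0,\{i_0\}\cup J_0}\cap\cM_{g,N}^{rt}\cong\cM_{g,\,N-|J_0|}^{rt}\times\oM_{0,|J_0|+2}$, each $\iota^*\psi_{n+j}$ with $n+j\in J_0$ is pulled back from the $\psi$-class of the factor $\oM_{0,|J_0|+2}$; consequently $\iota^*\!\bigl(\prod_{n+j\in J_0}\psi_{n+j}^{k_j+1}\bigr)$ lies in $H^*(\oM_{0,|J_0|+2})$ in degree $\ge|J_0|$, which is strictly larger than $\dim\oM_{0,|J_0|+2}=|J_0|-1$, so it vanishes; being a ring homomorphism, $\iota^*$ then kills the whole product with $\prod_j\psi_{n+j}^{k_j+1}$. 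Hence each of these monomials contributes $0$, and we obtain $\pi_*(\kappa_{k_1,\dots,k_m}\prod_i\psi_i^{d_i+1})=\kappa_{d_1,\dots,d_n,k_1,\dots,k_m}$ in $R^{g-2}(\cM_g)$. I expect this dimension–count vanishing on the boundary divisors of $\cM_{g,N}^{rt}$ to be the one genuine step; everything else is formal.

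Finally, I would conclude with the top intersection property without marked points: for nonnegative integers $e_1,\dots,e_\ell$ with $\sum e_i=g-2$ one has $\kappa_{e_1,\dots,e_\ell}=\frac{(2g-3+\ell)!\,(2g-1)!!}{(2g-1)!\,\prod_i(2e_i+1)!!}\,\kappa_{g-2}$ in $R^{g-2}(\cM_g)$. Applying this with $\ell=n+m$ and $(e_i)=(d_1,\dots,d_n,k_1,\dots,k_m)$, and using the elementary identity $\frac{(2g-1)!!}{(2g-1)!}=\frac1{(2g-2)!!}=\frac{(2g-3)!!}{(2g-2)!}$, turns the coefficient into exactly the one in the statement. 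The form of the formula recalled earlier was stated only for strictly positive indices; if some $d_i$ or $k_j$ equals $0$ one first removes the corresponding $\kappa_0$ via the standard relation $\kappa_{0,e_1,\dots,e_{\ell-1}}=(2g-3+\ell)\,\kappa_{e_1,\dots,e_{\ell-1}}$ in $R^{g-2}(\cM_g)$, which reproduces the same formula — a routine bookkeeping point.
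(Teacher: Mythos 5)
Your argument is correct, but it is worth pointing out that the paper does not actually prove this proposition: it states it and refers to \cite{BSZ} for the proof. Your proposal therefore supplies a genuine, self-contained derivation from ingredients the paper already quotes, namely the definition of the multi-index $\kappa$-classes as pushforwards of monomials in $\psi$-classes and the top intersection (socle) formula on $\cM_g$ without marked points. The two substantive steps both check out: first, the comparison $q^*\psi_i=\psi_i-\sum_{J}\delta_{0,\{i\}\cup J}$ together with the dimension count $\deg\bigl(\prod_{n+j\in J_0}\psi_{n+j}^{k_j+1}\bigr)\ge |J_0|>|J_0|-1=\dim\oM_{0,|J_0|+2}$ really does kill every correction term, giving $\pi_*\bigl(\kappa_{k_1,\dots,k_m}\prod_i\psi_i^{d_i+1}\bigr)=\kappa_{d_1,\dots,d_n,k_1,\dots,k_m}$; as a sanity check, for $n=m=1$ this reduces to $\pi_*(\kappa_k\psi_1^{d+1})=\kappa_d\kappa_k+\kappa_{d+k}=\kappa_{d,k}$, which one can verify directly from $\kappa_k=\pi^*\kappa_k+\psi_1^k$. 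Second, the identity $(2g-1)!!/(2g-1)!=1/(2g-2)!!=(2g-3)!!/(2g-2)!$ converts Faber's coefficient into exactly the one in the statement. You were also right to flag the zero indices: the quoted socle formula is stated for positive parts only, and the reduction $\kappa_{0,e_1,\dots,e_{\ell-1}}=(2g-3+\ell)\,\kappa_{e_1,\dots,e_{\ell-1}}$, which follows from $\kappa_0=2g-2$ on $\cM_g$ and the cycle-sum expression for multi-index $\kappa$-classes, absorbs them into the same closed form. In short, where the paper outsources this lemma, your argument proves it, and it is essentially the reduction used in \cite{BSZ} itself.
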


That result is proved in \cite{BSZ}.

\subsubsection{Coefficients}

Let's number the lines of $M$ in the following way: $\emptyset$ designates the first line,  the $n$ following lines will be denoted by an integer $i$ from $1$ to $n$ and the other lines by $\lbrace i,j\rbrace$, where $i$ and $j$ are integers such that $1\leq i<j\leq n$. We denote the columns of $M$ in the same way. It might be useful to define the set of these coordinates, $\cE:=\{\emptyset,1,...,n,\{1,2\},...,\{n-1,n\}\}$. Logically, for a column vector $u$ of dimension $\frac{n(n+1)}{2}+1$, for $k\in\cE$ designating a line we denote $u_k$ the entry of the vector $u$ at the line $k$. In the case $k=\{i,j\}$, we may allow ourselves to write $u_{i,j}$, we simply never do this abuse for coefficients of matrices since something like $M_{1,2,3}$ would yield to a lot of confusion.\

In the following, the letters $k,l,p,q$ will always denote positive integers such that
\begin{center}
$1\leq k<l\leq n$\\
$1\leq p<q\leq n$
\end{center}

and $\alpha,\beta\in\cE$, $\alpha$ standing for the lines and $\beta$ for the columns.

The last proposition enables us to calculate the coefficients of $M$.
\paragraph{\textbf{First line}}
\begin{itemize}
\item \textbf{$\alpha=\beta=\emptyset$}
\begin{align*}
M_{\emptyset\emptyset}&=\int_{\oM_{4,n}}\lambda_4\lambda_3\cdot\prod_{i=1}^n\psi_i\cdot\kappa_{1,1}\\
                      &=\frac{(n+7)!\cdot 5!!}{6!\cdot\prod_{i=1}^n1!!\cdot3!!\cdot3!!}\cdot\int_{\oM_4}\lambda_4\lambda_3\kappa_2\\
                      &=\frac{(n+7)!\cdot5}{6!\cdot3}\cdot\int_{\oM_4}\lambda_4\lambda_3\kappa_2\\
                      &=\frac{(n+7)!}{2^4\cdot3^3}\cdot\int_{\oM_4}\lambda_4\lambda_3\kappa_2.\\
\end{align*}

\item \textbf{$\alpha=\emptyset$ and $\beta=p$}
\begin{align*}
M_{\emptyset,p}&=\int_{\oM_{4,n}}\lambda_4\lambda_3\kappa_1\cdot\prod_{i=1,i\neq p}^n\psi_i\cdot\kappa_{1,1}\\
                &=\int_{\oM_{4,n}}\lambda_4\lambda_3(\kappa_{1,1,1}-2\kappa_{1,2})\cdot\prod_{i=1,i\neq p}^n\psi_i
\end{align*}
because $\kappa_1\kappa_{1,1}=\kappa_{1,1,1}-2\kappa_{1,2}$, this intersection number is then the sum of two terms which we calculate separately  
\begin{align*}
\int_{\oM_{4,n}}\lambda_4\lambda_3\kappa_{1,1,1}\cdot\prod_{i=1,i\neq p}^n\psi_i&=\frac{(n+8)!\cdot 5!!}{6!\cdot 3!!\cdot 3!!\cdot3!!}\cdot\int_{\oM_4}\lambda_4\lambda_3\kappa_2\\
                                                                           &=\frac{(n+8)!}{2^4\cdot 3^4}\cdot\int_{\oM_4}\lambda_4\lambda_3\kappa_2.
\end{align*}
The second term is 

\begin{align*}
-2\cdot\int_{\oM_{4,n}}\lambda_4\lambda_3\kappa_{1,2}\cdot\prod_{i=1,i\neq p}^n\psi_i&=-2\cdot\frac{(n+7)!\cdot5!!}{6!\cdot 3!!\cdot 5!!}\cdot\int_{\oM_4}\lambda_4\lambda_3\kappa_2\\
                                                                           &=-2\cdot\frac{(n+7)!}{2^4\cdot 3^3\cdot 5}\cdot\int_{\oM_4}\lambda_4\lambda_3\kappa_2.
\end{align*}
Hence we have
\begin{align*}
M_{\emptyset,p}&=\frac{(n+8)!}{2^4\cdot3^4}\cdot\int_{\oM_4}\lambda_4\lambda_3\kappa_2-2\cdot\frac{(n+7)!}{3\cdot6!}\cdot\int_{\oM_4}\lambda_4\lambda_3\kappa_2 \\        
         &=\frac{(5n+34)\cdot(n+7)!}{2^4\cdot 3^4\cdot5}\cdot\int_{\oM_4}\lambda_4\lambda_3\kappa_2.
\end{align*}

\item \textbf{$\alpha=\emptyset$ and $\beta=\lbrace p,q\rbrace$}
\begin{align*}
M_{\emptyset,\{ p,q\}}&=\int_{\oM_{4,n}}\lambda_4\lambda_3\psi_p^2\cdot\prod_{i=1,i\neq p,q}^n\psi_i\cdot\kappa_{1,1}\\
       &=\frac{(n+7)!\cdot 5!!}{6!\cdot 3!!\cdot\prod_{i=1,i\neq k,l}^n1!!\cdot 3!!\cdot 3!!}\cdot\int_{\oM_4}\lambda_4\lambda_3\kappa_2\\
       &=\frac{(n+7)!\cdot 3\cdot 5}{6!\cdot 3\cdot 3\cdot 3}\cdot\int_{\oM_4}\lambda_4\lambda_3\kappa_2\\
       &=\frac{(n+7)!}{2^4\cdot 3^4}\cdot\int_{\oM_4}\lambda_4\lambda_3\kappa_2.\\
\end{align*}
\end{itemize}

\paragraph{\textbf{Lines of type $\alpha=k$}}
\begin{itemize}
\item \textbf{$\alpha=k$ and $\beta=\emptyset$}
\begin{align*}
M_{k,\emptyset}&=\int_{\oM_{4,n}}\lambda_4\lambda_3\cdot\prod_{i=1}^n\psi_i\cdot\psi_k^2\\
&=\int_{\oM_{4,n}}\lambda_4\lambda_3\cdot\prod_{i=1,i\neq k}^n\psi_i\cdot\psi_k^3\\
&=\frac{(n+5)!\cdot 5!!}{6!\cdot\prod_{i=1,i\neq k}^n1!!\cdot 5!!}\cdot\int_{\oM_4}\lambda_4\lambda_3\kappa_2\\
&=\frac{(n+5)!\cdot 5!!}{6!\cdot 5!!}\cdot\int_{\oM_4}\lambda_4\lambda_3\kappa_2\\
&=\frac{(n+5)!}{2^4\cdot 3^2\cdot 5}\cdot\int_{\oM_4}\lambda_4\lambda_3\kappa_2.\\
\end{align*}

\item \textbf{$\alpha=k$ and $\beta=p$}
\begin{itemize}
\item When $k\neq p$
\begin{align*}
M_{k,p}&=\int_{\oM_{4,n}}\lambda_4\lambda_3\kappa_1\cdot\prod_{i=1,i\neq p}^n\psi_i\cdot\psi_k^2\\
&=\int_{\oM_{4,n}}\lambda_4\lambda_3\kappa_1\cdot\prod_{i=1,i\neq p,k}^n\psi_i\cdot\psi_k^3\\
&=\frac{(n+6)!\cdot 5!!}{6!\cdot\prod_{i=1,i\neq p,k}^n1!!\cdot 3!!\cdot 5!!}\cdot\int_{\oM_4}\lambda_4\lambda_3\kappa_2\\
&=\frac{(n+6)!}{2^4\cdot 3^3\cdot 5}\cdot\int_{\oM_4}\lambda_4\lambda_3\kappa_2.\\
\end{align*}

\item When $k=p$
\begin{align*}
M_{k,k}&=\int_{\oM_{4,n}}\lambda_4\lambda_3\kappa_1\cdot\prod_{i=1,i\neq k}^n\psi_i\cdot\psi_k^2\\
&=\frac{(n+6)!\cdot 5!!}{6!\prod_{i=1,i\neq k}^n1!!\cdot 3!!\cdot 3!!}\cdot\int_{\oM_4}\lambda_4\lambda_3\kappa_2\\
&=\frac{(n+6)!\cdot 5}{6!\cdot 3}\cdot\int_{\oM_4}\lambda_4\lambda_3\kappa_2\\
&=\frac{(n+6)!}{2^4\cdot 3^3}\cdot\int_{\oM_4}\lambda_4\lambda_3\kappa_2.\\
\end{align*}
\end{itemize}

\item \textbf{$\alpha=k$ and $\beta=\lbrace p,q\rbrace$}
\begin{itemize}
\item When $k\neq p,q$

\begin{align*}
M_{k,\{ p,q\}}&=\int_{\oM_{4,n}}\lambda_4\lambda_3\psi_p^2\cdot\prod_{i=1, i\neq p,q}^n\psi_i\cdot\psi_k^2\\
       &=\int_{\oM_{4,n}}\lambda_4\lambda_3\psi_p^2\psi_k^3\cdot\prod_{i=1,i\neq k,p,q}^n\psi_i\\
       &=\frac{(n+5)!\cdot 5!!}{6!\cdot 5!!\cdot 3!!}\cdot\int_{\oM_4}\lambda_4\lambda_3\kappa_2\\
       &=\frac{(n+5)!}{2^4\cdot 3^3\cdot 5}\cdot \int_{\oM_4}\lambda_4\lambda_3\kappa_2.
\end{align*}

\item When $k=p$
\begin{align*}
M_{k,\{ k,q\}}&=\int_{\oM_{4,n}}\lambda_4\lambda_3\psi_p^2\cdot \prod_{i=1, i\neq p,q}^n\psi_i\cdot\psi_p^2\\
       &=\int_{\oM_{4,n}}\lambda_{4,n}\lambda_3\psi_p^4\cdot \prod_{i=1, i\neq p,q}^n\psi_i\\
       &=\frac{(n+5)!\cdot 5!!}{6!\cdot 7!!}\cdot\int_{\oM_4}\lambda_4\lambda_3\kappa_2\\
       &=\frac{(n+5)!}{2^4\cdot 3^2\cdot 5\cdot 7}\cdot\int_{\oM_4}\lambda_4\lambda_3\kappa_2.
\end{align*}

\item When $k=q$
\begin{align*}
M_{k,\{ p,k\}}&=\int_{\oM_{4,n}}\lambda_4\lambda_3\psi_p^2\cdot\prod_{i=1, i\neq p,q}^n\psi_i\cdot\psi_q^2\\
       &=\frac{(n+5)!\cdot 5!!}{6!\cdot 3!!\cdot 3!!}\cdot\int_{\oM_4}\lambda_4\lambda_3\kappa_2\\
       &=\frac{(n+5)!}{2^4\cdot 3^3}\cdot\int_{\oM_4}\lambda_4\lambda_3\kappa_2.
\end{align*}
\end{itemize}
\end{itemize}

\paragraph{\textbf{Lines of type $\alpha=\lbrace k,l\rbrace$}}
\begin{itemize}
\item \textbf{$\alpha=\lbrace k,l\rbrace$ and $\beta=\emptyset$}
\begin{align*}
M_{\{ k,l\},\emptyset}&=\int_{\oM_{4,n}}\lambda_4\lambda_3\cdot\prod_{i=1}^n\psi_i\cdot\psi_k\psi_l\\
       &=\int_{\oM_{4,n}}\lambda_4\lambda_3\cdot\prod_{i=1,i\neq k,l}^n\psi_i\cdot\psi_k^2\psi_l^2\\
       &=\frac{(n+5)!\cdot 5!!}{6!\cdot 3!!\cdot 3!!}\cdot\int_{\oM_4}\lambda_4\lambda_3\kappa_2\\
       &=\frac{(n+5)!}{2^4\cdot 3^3}\cdot\int_{\oM_4}\lambda_4\lambda_3\kappa_2.
\end{align*}

\item \textbf{$\alpha=\{ k,l\}$ and $\beta=p$}
\begin{itemize}
\item When $p\neq k,l$
\begin{align*}
M_{\{ k,l\},p}&=\int_{\oM_{4,n}}\lambda_4\lambda_3\kappa_1\cdot\prod_{i=1,i\neq p}^n\psi_i\cdot\psi_k\psi_l\\
       &=\int_{\oM_{4,n}}\lambda_4\lambda_3\kappa_1\cdot\prod_{i=1,i\neq p,k,l}^n\psi_i\cdot\psi_k^2\psi_l^2\\
       &=\frac{(n+6)!\cdot 5!!}{6!\cdot 3!!\cdot 3!!\cdot 3!!}\cdot\int_{\oM_4}\lambda_4\lambda_3\kappa_2\\
       &=\frac{(n+6)!}{2^4\cdot 3^4}\cdot\int_{\oM_4}\lambda_4\lambda_3\kappa_2.
\end{align*}

\item When $p=k$ or $p=l$
\begin{align*}
M_{\{ k,l\},k}&=M_{\{ k,l\},l}=\int_{\oM_{4,n}}\lambda_4\lambda_3\kappa_1\cdot\prod_{i=1,i\neq k}^n\psi_i\cdot\psi_k\psi_l\\
       &=\int_{\oM_{4,n}}\lambda_4\lambda_3\kappa_1\cdot\prod_{i=1,i\neq l}^n\psi_i\psi_l^2\\
       &=\frac{(n+6)!\cdot 5!!}{6!\cdot 3!!\cdot 3!!}\cdot\int_{\oM_4}\lambda_4\lambda_3\kappa_2\\
       &=\frac{(n+6)!}{2^4\cdot 3^3}\cdot \int_{\oM_4}\lambda_4\lambda_3\kappa_2.
\end{align*}

\end{itemize}

\item \textbf{$\alpha=\{ k,l\}$ and $\beta=\{ p,q\}$}
\begin{itemize}
\item When $\{ k,l\}\cap\{ p,q\}$ is empty
\begin{align*}
M_{\{ k,l\},\{ p,q\}}&=\int_{\oM_{4,n}}\lambda_4\lambda_3\psi_p^2\cdot\prod_{i=1,i\neq p,q}^n\psi_i\cdot\psi_p\psi_q\\
       &= \int_{\oM_{4,n}}\lambda_4\lambda_3\psi_p^2\cdot\prod_{i=1,i\neq k,l,p,q}^n\psi_i\cdot\psi_k^2\psi_l^2\\
       &=\frac{(n+5)!\cdot 5!!}{6!\cdot 3!!\cdot 3!!\cdot 3!!}\cdot\int_{\oM_4}\lambda_4\lambda_3\kappa_2\\
       &=\frac{(n+5)!}{2^4\cdot 3^4}\cdot\int_{\oM_4}\lambda_4\lambda_3\kappa_2.\\
\end{align*}
\item When $\{ k,l\}\cap\{ p,q\}=\{ p\}$
\begin{align*}
M_{\{ p,l\},\{ p,q\}}&=\int_{\oM_{4,n}}\lambda_4\lambda_3\psi_p^2\cdot\prod_{i=1,i\neq p,q}^n\psi_i\cdot\psi_p\psi_l\\
      &=\int_{\oM_{4,n}}\lambda_4\lambda_3\psi_p^3\cdot\prod_{i=1,i\neq k,l,p,q}^n\psi_i\cdot\psi_l^2\\
      &=\frac{(n+5)!\cdot 5!!}{6!\cdot 3!!\cdot 5!!}\cdot\int_{\oM_4}\lambda_4\lambda_3\kappa_2\\
      &=\frac{(n+5)!}{2^4\cdot 3^3\cdot 5}\cdot\int_{\oM_4}\lambda_4\lambda_3\kappa_2
\end{align*}
and 
$$M_{\{ k,l\},\{ p,k\}}=\frac{(n+5)!}{2^4\cdot 3^3\cdot 5}\cdot\int_{\oM_4}\lambda_4\lambda_3\kappa_2.$$

\item When $\{ k,l\}\cap\{ p,q\}=\{ q\}$
\begin{align*}
M_{\{ q,l\},\{ p,q\}}&=\int_{\oM_{4,n}}\lambda_4\lambda_3\psi_p^2\cdot\prod_{i=1,i\neq p,q}^n\psi_i\cdot\psi_q\psi_l\\
      &=\int_{\oM_{4,n}}\lambda_4\lambda_3\psi_p^2\psi_l^2\cdot\prod_{i=1,i\neq q,p,l}^n\psi_i\\
      &=\frac{(n+5)!\cdot 5!!}{6!\cdot 3!!\cdot 3!!}\cdot\int_{\oM_4}\lambda_4\lambda_3\kappa_2\\
      &=\frac{(n+5)!}{2^4\cdot 3^3}\cdot\int_{\oM_4}\lambda_4\lambda_3\kappa_2
\end{align*}
and 
$$M_{\{ k,q\},\{ p,q\}}=\frac{(n+5)!}{2^4\cdot 3^3}\cdot\int_{\oM_4}\lambda_4\lambda_3\kappa_2.$$

\item When $\lbrace k,l\rbrace\cap\lbrace p,q\rbrace=\lbrace p,q\rbrace$
\begin{align*}
M_{\{ p,q\},\{ p,q\}}&=\int_{\oM_{4,n}}\lambda_4\lambda_3\psi_p^2\cdot\prod_{i=1,i\neq p,q}^n\psi_i\cdot\psi_p\psi_q\\
      &=\int_{\oM_{4,n}}\lambda_4\lambda_3\psi_p^3\cdot\prod_{i=1,i\neq p}^n\psi_i\\
      &=\frac{(n+5)!\cdot 5!!}{6!\cdot 5!!}\cdot\int_{\oM_4}\lambda_4\lambda_3\kappa_2\\
      &=\frac{(n+5)!}{2^4\cdot 3^2\cdot 5}\cdot\int_{\oM_4}\lambda_4\lambda_3\kappa_2.
\end{align*}
\end{itemize}
\end{itemize}

\subsubsection{Simplification of M, \^{M}}

The integral $\displaystyle{\int_{\oM_4}\lambda_4\lambda_3\kappa_2}$ is non zero since, for any genus $g$, we have from \cite{BSZ}
$$\int_{\oM_g}\lambda_g\lambda_{g-1}\kappa_{g-2}=\frac{(-1)^{g-1}B_{2g}(g-1)!}{2^g(2g)!}.$$
Recalling that $B_{2g}$ are the Bernouilli numbers given by 
$$\frac{1}{1-e^{-x}}=1+\frac{x}{2}+\sum_{n\geq 1}\frac{B_{2n}}{(2n)!}x^{2n}.$$
We have $B_8=\frac{-1}{30}$, so we can even calculate
$$\int_{\oM_4}\lambda_4\lambda_3\kappa_2=\frac{-1}{2^{11}\cdot 3^2\cdot 5^2\cdot 7},$$
in $R^9(\oM_4)\simeq \Q$.
In order to simplify the calculations, we define a matrix $\hM$ by dividing $M$ by $\displaystyle{\frac{(n+5)!}{2^4\cdot 3}\cdot\int_{\oM_4}\lambda_4\lambda_3\kappa_2}$ and by the following operations on the columns and raws of $M$
\begin{itemize}
\item The line $\alpha=\emptyset$ is divided by $n+7$.
\item The lines $\alpha=i$, for $1\leq i\leq n$, are divided by $\frac{3}{7}$.
\item The column $\beta=\emptyset$ is divided by $3$.
\item The column $\beta=i$, for $1\leq i\leq n$, are divided by $n+6$.
\end{itemize}

Here are the coefficients of $\hM$.

\paragraph{\textbf{First line}}

\begin{itemize}
\item \textbf{$\alpha=\beta=\emptyset$}

$$\hM_{\emptyset,\emptyset}=5(n+6).$$

\item \textbf{$\alpha=\emptyset$ and $\beta=p$}

$$\hM_{\emptyset,k}=5n+34.$$

\item \textbf{$\alpha=\emptyset$ and $\beta=\{ k,l\}$}

$$\widehat{M}_{\emptyset,k\{ k,l\}}=5(n+6).$$
\end{itemize}

\paragraph{\textbf{Lines of type $\alpha=k$}}

\begin{itemize}
\item \textbf{$\alpha=k$ and $\beta=\emptyset$}
$$\hM_{k,\emptyset}=7.$$

\item \textbf{$\alpha=k$ and $\beta=p$}
\begin{itemize}
\item When $k\neq p$
$$\hM_{k,p}=7.$$

\item When $k=p$

$$\hM_{k,k}=35.$$
\end{itemize}

\item \textbf{$\alpha=k$ and $\beta=\{ p,q\}$}
\begin{itemize}
\item When $k\neq p,q$
$$\hM_{k,\{ p,q\}}=7.$$

\item When $k=p$
$$\hM_{k,\{ k,q\}}=3.$$

\item When $k=q$
$$\hM_{k,\{ p,k\}}=35.$$

\end{itemize}
\end{itemize}

\paragraph{\textbf{Lines of type $\alpha=\{ k,l\}$}}

\begin{itemize}
\item \textbf{$\alpha=\{ k,l\}$ and $\beta=\emptyset$}

$$\hM_{\{ k,l\},\emptyset}=5.$$

\item \textbf{$\alpha=\{ k,l\}$ and $\beta=p$}
\begin{itemize}
\item When $p\neq k,l$
$$\hM_{\{ k,l\},p}=5.$$
\item When $p=k$ (or $p=l$)
$$\hM_{\{ k,l\},k}=15.$$
\end{itemize}

\item \textbf{$\alpha=\{ k,l\}$ and $\beta=\{ p,q\}$}
\begin{itemize}
\item When $\{ k,l\}\cap\{ p,q\}$ is empty
$$\hM_{\{ k,l\},\{ p,q\}}=5.$$

\item When $\{ k,l\}\cap\{ p,q\}=\{ p\}$
$$\hM_{\{ p,l\},\{ p,q\}}=3,$$
$$\hM_{\{ k,p\},\{ p,q\}}=3.$$

\item When $\{ k,l\}\cap\{ p,q\}=\{ q\}$
$$\hM_{\{ q,l\},\{ p,q\}}=15,$$
$$\hM_{\{ k,q\},\{ p,q\}}=15.$$

\item When $\{ k,l\}\cap\{ p,q\}=\{p,q\}$
$$\hM_{\{ p,q\},\{ p,q\}}=9.$$
\end{itemize}
\end{itemize}

\subsection{Rank of M}

We look at $M$ as the matrix of an endomorphism of a $\Q$-vector space $E$ of dimension $N = 1 + n + n(n-1)/2$. A vector of $E$ has coordinates $\alpha$, $\beta_i$ for $1 \leq i \leq n$, and $\gamma_{ij}$ with $1 \leq i < j \leq n$. 

Calculations via Maple shows that, for $n\leq 18$, the characteristic polynomial of $\hM$ is the product of degree $2$ polynomials of multiplicity $1$, a degree $3$ polynomial of multiplicity $1$ also (sometimes this ones splits into a two polynomials of degrees $1$ and $2$) and a polynomial of degree $1$ with a multiplicity which is a function of $n$. The decomposition via Maple is made only if it leads to polynomials in $\mathbb{Z}[X]$. This justifies the strategy for the calculation of the rank.

The polynomials of degree $2$ are associated to stable planes which we describe at first. Next we will look for the eigenvectors associated to the polynomial of degree $1$ and we will finish by the study of the stable $3$-dimensional space of $\hM$.

\subsubsection{Basis of stable planes}

For $1 \leq i \leq n-1$ introduce two vectors $u_i$ and $v_i$ in~$E$. These vectors have the following nonzero coordinates:
\begin{align*}
u_i &:  (\beta_i=1, \beta_{i+1} = -1)\\
v_i & :  (\gamma_{ik} = 1, \gamma_{i+1,k} = -1) \mbox{ for },~~k\not= i,i+1.
\end{align*}
All the coordinates that are not listed are equal to~0.

\begin{center}
\includegraphics[scale=0.4]{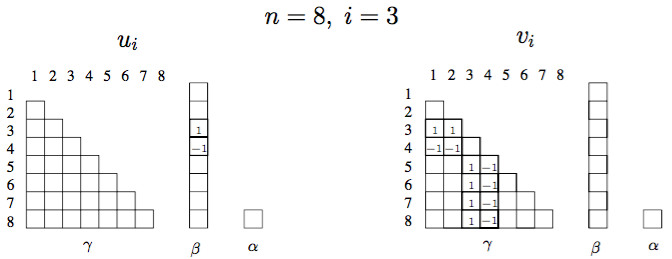}
\end{center}

\begin{prop}
The plane spanned by $u_i$ and $v_i$ is invariant under the action of~$M$ for every~$i$. Specifically, we have
\begin{align*}
M(u_i) & = 28u_i+10v_i\\
M(v_i) & = (32i-24-4n)u_i+(12i-12-2n)v_i\\
\end{align*}
\end{prop}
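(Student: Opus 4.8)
The plan is to verify the two stated identities by direct computation using the explicit coefficients of $\hM$ listed above, since $M$ and $\hM$ differ only by the overall scalar $\frac{(n+5)!}{2^4\cdot 3}\cdot\int_{\oM_4}\lambda_4\lambda_3\kappa_2$ together with the row and column rescalings described in the construction of $\hM$. Concretely, I would first express the action of $\hM$ on the coordinates $\alpha,\beta_p,\gamma_{pq}$ by reading off, for each basis vector $e_\beta$ of $E$, the column $\beta$ of $\hM$; that is, $\hM(e_\emptyset)$, $\hM(e_p)$, and $\hM(e_{\{p,q\}})$ are each a prescribed linear combination of $e_\emptyset$, the $e_k$, and the $e_{\{k,l\}}$, with coefficients taken from the tables in Section on coefficients of $\hM$. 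The vectors $u_i$ and $v_i$ are supported on $\{e_i,e_{i+1}\}$ and $\{e_{\{i,k\}},e_{\{i+1,k\}} : k\neq i,i+1\}$ respectively, so only finitely many entries of $\hM$ actually enter.

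Next I would compute $\hM(u_i) = \hM(e_i) - \hM(e_{i+1})$ and $\hM(v_i) = \sum_{k\neq i,i+1}\big(\hM(e_{\{i,k\}}) - \hM(e_{\{i+1,k\}})\big)$, and then collect coefficients. For $\hM(u_i)$ the key point is that the column of $e_i$ and the column of $e_{i+1}$ agree in every row except the $\emptyset$ row (they both give $5n+34$ there, so it cancels), the rows of type $\gamma$ meeting the index $i$ or $i+1$, and the diagonal-type rows $k=i$ versus $k=i+1$; after subtraction only the combination landing in $\mathrm{span}(u_i,v_i)$ survives, yielding $28u_i+10v_i$ once the row/column rescalings between $M$ and $\hM$ are undone. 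For $\hM(v_i)$ one similarly subtracts the $\{i,k\}$ and $\{i+1,k\}$ columns and sums over $k$; here the coefficient $32i-24-4n$ in front of $u_i$ and $12i-12-2n$ in front of $v_i$ arise from counting how many of the $n-2$ (or $n-3$) indices $k$ contribute each tabulated value ($3$, $5$, $9$, $15$ depending on the intersection pattern of $\{i,k\}$ with the column index), so the linear-in-$i$ and linear-in-$n$ dependence comes out of these index counts. I would carry this out once carefully and present the bookkeeping as a short table of which coefficient of $\hM$ is picked up in each case.

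The main obstacle I anticipate is the translation between $\hM$ and $M$: the proposition is stated for $M$, but the convenient coefficients are those of $\hM$, and $\hM$ was obtained from $M$ by dividing the $\emptyset$ row by $n+7$, the rows $\alpha=i$ by $3/7$, the column $\beta=\emptyset$ by $3$, the columns $\beta=i$ by $n+6$, and the whole matrix by a scalar. One must check that the subspace $\mathrm{span}(u_i,v_i)$ and the claimed eigen-type relations are compatible with these rescalings — in particular that the asymmetric treatment of $\beta$-columns versus $\{k,l\}$-columns does not spoil invariance — and track exactly how the factors $n+6$, $n+7$, $3/7$ enter the final coefficients $28$, $10$, $32i-24-4n$, $12i-12-2n$. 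An alternative, perhaps cleaner, route is to bypass $\hM$ entirely and recompute the relevant entries of $M$ directly from the closed formulas in the coefficients subsection (all of which are of the form $\frac{(n+c)!}{2^a 3^b 5^c 7^d}\int_{\oM_4}\lambda_4\lambda_3\kappa_2$), substitute into $M(u_i)$ and $M(v_i)$, and simplify; this makes the invariance manifest at the cost of slightly heavier factorial arithmetic. Either way, once the entries are in hand the verification is a finite linear-algebra check with no conceptual difficulty, so I would present it as such.
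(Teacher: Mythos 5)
Your proposal is correct and follows essentially the same route as the paper: the paper's proof is precisely the coordinate-by-coordinate evaluation of $\hM\cdot u_i$ and $\hM\cdot v_i$ from the tabulated entries of $\hM$, organized by the coordinate types $\emptyset$, $k$, and $\{k,l\}$, with the coefficients $32i-24-4n$ and $12i-12-2n$ arising exactly from the index counts you describe. Your worry about translating between $M$ and $\hM$ is well founded but is one the paper itself glosses over --- it computes with $\hM$ throughout and states the result for $M$, so the displayed coefficients are really those of $\hM$; the invariance of the plane does survive the rescalings (since $u_i$ is supported on the uniformly rescaled $\beta$-rows and $\beta$-columns, and $v_i$ on the $\gamma$-ones), but the literal coefficients for $M$ would differ.
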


\begin{proof} We will calculate the products $M\cdot u_i$ and $M\cdot v_i$ to show that they are linear combinations of $u_i$ and $v_i$.
\paragraph{\textbf{Coordinate $\emptyset$}} 
We have
\begin{align*}
\big(\hM\cdot u_i\big)_{\emptyset}&=\hM_{\emptyset,i}-\hM_{\emptyset,i+1}\\
                              &=(5n+34)-(5n+34)\\
                              &=0.
\end{align*}

\paragraph{\textbf{Coordinates $k$}}

\begin{itemize}
\item If $k<i$, we have 
\begin{align*}
\big(\hM\cdot u_i\big)_k&=\hM_{k,i}-\hM_{k,i+1}\\
                  &=7-7\\
                  &=0.
\end{align*}

\item Similarly when $i+1<k$, we have
\begin{align*}
\big(\hM\cdot u_i\big)_k&=\hM_{i,k}-\hM_{i+1,k}\\
                        &=7-7\\
                        &=0.
\end{align*}

\item The cases $k=i$ and $k=i+1$, we have
\begin{align*}
\big(\hM\cdot u_i\big)_i&=\hM_{i,i}-\hM_{i+1,i}\\
                        &=35-7\\
                        &=28
\end{align*}
and
\begin{align*}
\big(\hM\cdot u_i)\big)_{i+1}&=\hM_{i,i+1}-\hM_{i+1,i+1}
                             &=7-35\\
                             &=-28.
\end{align*}                             
\end{itemize}

\paragraph{\textbf{Coordinates $\lbrace k,l\rbrace$}}
Now let's calculate the coordinates of type $\lbrace k,l\rbrace$ of $\big(M\cdot u_i\big)$. 
\begin{itemize}
\item For $k$ and $l$ distinct integers such that $1\leq k<l\leq n$, none of them being equal to $i$ or $i+1$, we have
\begin{align*}
\big(\hM\cdot u_i)\big)_{\lbrace k,l\rbrace}&=\hM_{\lbrace k,l\rbrace,i}-\hM_{\lbrace k,l\rbrace,i+1}\\
                                       &=15-15\\
                                       &=0
\end{align*}
\item  We also get 
$$\big(M\cdot u_i\big)_{\lbrace i,i+1\rbrace}=\hM_{\lbrace i,i+1\rbrace,i}-\hM_{\lbrace i,i+1\rbrace,i+1}=15-15=0.$$

\item For $k<i$, 

$$\big(M\cdot u_i\big)_{\lbrace k,i\rbrace}=\hM_{\lbrace k,i\rbrace,i}-\hM_{\lbrace k,i\rbrace,i+1}=15-5=10$$
$$\big(M\cdot u_i\big)_{\lbrace k,i+1\rbrace}=\hM_{\lbrace k,i\rbrace,i+1}-\hM_{\lbrace k,i\rbrace,i}=5-15=-10.$$
\item For $l>i+1$, 

$$\big(M\cdot u_i\big)_{\lbrace i,l\rbrace}=\hM_{\lbrace i,l\rbrace,a}-\hM_{\lbrace i,l\rbrace,i+1}=15-5=10$$
$$\big(M\cdot u_i\big)_{\lbrace i+1,l\rbrace}=\hM_{\lbrace i,l\rbrace,i+1}-\hM_{\lbrace i,l\rbrace,i}=5-15=-10.$$
\end{itemize}

In conclusion, we see that
$$\hM\cdot u_i=28u_i+10v_i.$$

~\\

We calculate the coordinates of $M\cdot v_i$ in the same way and we get 
\paragraph{\textbf{Coordinate $\emptyset$}}
We have
\begin{align*}
\big(\hM\cdot v_i\big)_{\emptyset}&=\sum_{p=1}^{i-1}\hM_{\emptyset,\lbrace p,i\rbrace}+\sum_{p=i+2}^n\hM_{\emptyset,\lbrace i,p\rbrace}\\
&~~~-\sum_{p=1}^{i-1}\hM_{\emptyset,\lbrace p,i+1\rbrace}-\sum_{p=i+2}^n\hM_{\emptyset,\lbrace i+1,p\rbrace}\\
                                  &=\sum_{p=1}^{i-1}5(n+6)+\sum_{p=i+2}^n5(n+6)\\
                                  &~~~-\sum_{p=1}^{i-1}5(n+6)-\sum_{p=i+2}^n5(n+6)\\
                                  &=0.
\end{align*}

\paragraph{\textbf{Coordinates $k$}}
\begin{itemize}
\item For $k$ a positive integer such that $k<i$,
We have 
\begin{align*}
\big(M\cdot v_i\big)_k&=\sum_{p=1}^{i-1}\hM_{k,\lbrace p,i\rbrace}+\sum_{p=i+2}^n\hM_{k,\lbrace i,p\rbrace}-\sum_{p=1}^{i-1}\hM_{k,\lbrace p,i+1\rbrace}\\
                                  &~~-\sum_{p=i+2}^n\hM_{k,\lbrace i+1,p\rbrace}\\
                                  &=\sum_{p=1,p\neq k}^{i-1}\hM_{k,\lbrace p,i\rbrace}+\hM_{k\{k,i\}}+\sum_{p=i+2}^n\hM_{k,\lbrace i,p\rbrace}\\
                                  &~~-\sum_{p=1,p\neq k}^{i-1}\hM_{k,\lbrace p,i+1\rbrace}-\hM_{k,\{k,i+1}-\sum_{p=i+2}^n\hM_{k,\lbrace i+1,p\rbrace}\\
                                  &=\sum_{p=1,p\neq k}^{i-1}7+3+\sum_{p=i+2}^n7-\sum_{p=1,p\neq k}^{i-1}7-3-\sum_{p=i+2}^n7\\
                                  &=0.
\end{align*}
\item Similarly, for $k$ such that $i+1<k$, we have
$$\big(M\cdot v_i\big)_k=0.$$

\item For $k=i$ and $k=i+1$, we calculate
\begin{align*}
\big(M\cdot v_i\big)_i&=\sum_{p=1}^{i-1}\hM_{i,\lbrace p,i\rbrace}+\sum_{p=i+2}^n\hM_{i,\lbrace i,p\rbrace}\\
&~~~-\sum_{p=1}^{i-1}\hM_{i,\lbrace p,i+1\rbrace}-\sum_{p=i+2}^n\hM_{i,\lbrace i+1,p\rbrace}\\
                  &=\sum_{p=1}^{i-1}35+\sum_{p=i+2}^n3-\sum_{p=1}^{i-1}7-\sum_{p=i+2}^n7\\
                  &=35(i-1)+3(n-i-1)\\
                  &~~~-7(i-1)-7(n-i-1)\\
                  &=32i-24-4n
\end{align*}
and
\begin{align*}
\big(M\cdot v_i\big)_{i+1}&=\sum_{p=1}^{i-1}\hM_{i+1,\lbrace p,i\rbrace}+\sum_{p=i+2}^n\hM_{i+1,\lbrace i,p\rbrace}\\
                     &~~-\sum_{p=1}^{i-1}\hM_{i+1,\lbrace p,i+1\rbrace}-\sum_{p=i+2}^n\hM_{i+1,\lbrace i+1,p\rbrace}\\
                  &=\sum_{p=1}^{i-1}7+\sum_{p=i+2}^n7-\sum_{p=1}^{i-1}35-\sum_{p=i+2}^n3\\
                  &=7(i-1)+7(n-i-1)\\
                  &~~-35(i-1)-3(n-i-1)\\
                  &=-32i+24+4n.
\end{align*}

\end{itemize}

\paragraph{\textbf{Coordinates $\lbrace k,l\rbrace$}}
\begin{itemize}
\item For $k$ and $l$ distinct integers such that $1\leq k<l\leq i $, then 

\begin{align*}
\big(M\cdot v_i\big)_{\{ k,l\}}&=\sum_{p=1}^{i-1}\hM_{\{k,l\},\{p,i\}}+\sum_{p=i+2}^n\hM_{\{k,l\},\{i+1,p\}}\\
                               &~~-\sum_{p=1}^{i-1}\hM_{\{k,l\},\{ p,i+1\}}-\sum_{p=i+2}^n\hM_{\{k,l\},\{ i+1,p\}}\\
                               &=\sum_{p=1,p\neq k,l}^{i-1}\hM_{\{k,l\},\{p,i\}}+\hM_{\{k,l\},\{k,i\}}+\hM_{\{k,l\},\{l,i\}}\\
                               &~~+\sum_{p=i+2,p\neq k,l}^n\hM_{\{k,l\},\{i+1,p\}}-\sum_{p=1,p\neq k,l}^{i-1}\hM_{\{k,l\},\{ p,i+1\}}\\
                               &~~-\hM_{\{k,l\},\{k,i+1\}}-\hM_{\{k,l\},\{l,i+1\}}\\
                               &~~-\sum_{p=i+2,p\neq k,l}^n\hM_{\{k,l\},\{ i+1,p\}}\\
                               &=\sum_{p=1,p\neq k,l}^{i-1}5+3+3+\sum_{p=i+2,p\neq k,l}^n5\\
                               &~~-\sum_{p=1,p\neq k,l}^{i-1}5-3-3-\sum_{p=i+2,p\neq k,l}^n5\\                               
                               &=0.
\end{align*}

\item In the same way, in all cases with $k,l\neq i,i+1$, we have
$$\big(M\cdot v_i\big)_{\{k,l\}}=0.$$

\item We have

\begin{align*}
\big(M\cdot v_i\big)_{\lbrace i,i+1\rbrace}&=\sum_{p=1}^{i-1}\hM_{\{i,i+1\},\{p,i\}}+\sum_{p=i+2}^n\hM_{\{i,i+1\},\{i+1,p\}}\\
                                           &~~-\sum_{p=1}^{i-1}\hM_{\{i,i+1\},\{ p,i+1\}}-\sum_{p=i+2}^n\hM_{\{i,i+1\},\{ i+1,p\}}\\
                                           &=\sum_{p=1}^{i-1}15+\sum_{p=i+2}^n15-\sum_{p=1}^{i-1}15-\sum_{p=i+2}^n15\\
                                           &=0.
\end{align*} 
\item For $k<i$, we have

\begin{align*} 
\big(M\cdot v_i\big)_{\lbrace k,i\rbrace}=&\sum_{p=1}^{i-1}\hM_{\lbrace k,i\rbrace,\lbrace p,i\rbrace}+\sum_{p=i+2}^n\hM_{\lbrace k,i\rbrace,\lbrace i,p\rbrace}\\
                                      &-\sum_{p=1}^{i-1}\hM_{\lbrace k,i\rbrace,\lbrace                        p,i+1\rbrace}-\sum_{p=i+2}^n\hM_{\lbrace k,i\rbrace,\lbrace i+1,p\rbrace}\\
                                     =&\sum_{p=1,p\neq k}^{i-1}15+9+\sum_{p=i+2}^n3\\
                                     &-\sum_{p=1,p\neq k}^{i-1}5-3-\sum_{p=i+2}^n5\\
                                     =&15(i-2)+9+3(n-i-1)\\
                                     &-5(i-2)-3-5(n-i-1)\\
                                     =&12i-12-2n.
\end{align*}

Similarly, for $k<i$ and $l>i+1$, we have

\begin{align*}
\big(\hM\cdot v_i\big)_{\lbrace k,i+1\rbrace}&=-12i+12+2n\\
\big(\hM\cdot v_i\big)_{\lbrace i,l\rbrace}&=12i-12-2n\\
\big(\hM\cdot v_i\big)_{\lbrace i+1,l\rbrace}&=-12i+12+2n.
\end{align*}

\end{itemize}

Gathering these results together we have
$$M\cdot v_i=(32i-24-4n)u_i+(12i-12-2n)v_i.$$

Thus the vectors $u_i$ and $v_i$ form a stable plane for $M$
\end{proof}

We can form a base of $E$ containing the pairs $(u_i,v_i)$ for $1\leq i\leq n-1$, the matrix $M$ in this this base contains blocks as

$$\begin{pmatrix}
28 & 32i-24-4n \\
10 & 12i-12-2n
\end{pmatrix}$$
corresponding to the $i$th stable plane. The determinant of such a matrix is 
$$-16(n-i+6).$$

	This determinant vanishes for $i=n+6$, which never happens since $i<n$.

\subsubsection{Eigenvectors of \^{M}}
Further, for $1 \leq i < j < k < l \leq n$, introduce two vectors $w_{ijkl}$ and $t_{ijkl}$ in~$E$. These vectors have the following nonzero coordinates:
\begin{align*}
w_{ijkl} & : (\gamma_{ik}= 1, \gamma_{jl} = 1,  \gamma_{il} = -1, \gamma_{jk} = -1),\\
t_{ijkl} & : (\beta_j = 2, \beta_k = -2, \gamma_{ij} = -3, \gamma_{ik} = 3, \gamma_{jl}=-5, \gamma_{kl} = 5).
\end{align*}
All the coordinates that are not listed are equal to~0.

\begin{center}
\includegraphics[scale=0.4]{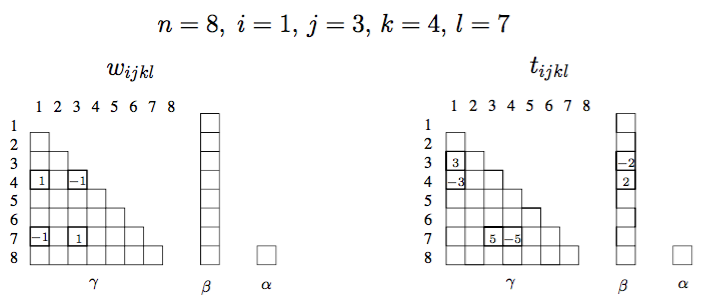}
\end{center}

\begin{prop}
The vectors $w_{ijkl}$ and $t_{ijkl}$ are eigenvectors of~$M$ with eigenvalue~$-4$ for any $i,j,k,l$.
\end{prop}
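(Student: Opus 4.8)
The plan is to verify directly that $\hM \cdot w_{ijkl} = -4\, w_{ijkl}$ and $\hM \cdot t_{ijkl} = -4\, t_{ijkl}$ by computing, for each coordinate $\alpha \in \cE$, the $\alpha$-entry of the matrix-vector product and matching it against $-4$ times the corresponding entry of the vector. Since both $w_{ijkl}$ and $t_{ijkl}$ are supported only on coordinates involving the four indices $i,j,k,l$ (plus, in the case of $t_{ijkl}$, the single-index coordinates $j$ and $k$), the key simplification is that the relevant columns of $\hM$ take only a few distinct values, governed entirely by the intersection pattern of the column index $\beta$ with the set $\{i,j,k,l\}$. So I would organize the computation by the type of the output coordinate $\alpha$: first $\alpha = \emptyset$, then $\alpha = m$ for a single index $m$ (splitting into the subcases $m \in \{i,j,k,l\}$ versus $m \notin \{i,j,k,l\}$), then $\alpha = \{p,q\}$ for a pair (splitting according to $|\{p,q\} \cap \{i,j,k,l\}|$ and which elements are shared). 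In each case the sum over the support of the vector collapses to a short linear combination of the $\hM$-coefficients tabulated in the previous subsection.

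For $w_{ijkl}$, whose support lies entirely in the pair-coordinates $\{i,k\},\{j,l\},\{i,l\},\{j,k\}$ with coefficients $1,1,-1,-1$, the $\emptyset$-row and every single-index row give $0$ because $\hM_{\emptyset,\{p,q\}}$ and $\hM_{m,\{p,q\}}$ depend on $\{p,q\}$ only through whether it meets a fixed index, and the four pairs $\{i,k\},\{j,l\},\{i,l\},\{j,k\}$ are balanced with respect to any single index (each of $i,j,k,l$ appears in exactly one $+$ pair and one $-$ pair, and the index $m \notin\{i,j,k,l\}$ sees all four as ``disjoint''). The substantive check is the pair-rows $\alpha = \{p,q\}$: for $\{p,q\}$ disjoint from $\{i,j,k,l\}$ all four coefficients $\hM_{\{p,q\},\cdot}$ equal $5$, so the output is $0 = -4 \cdot 0$; for $\{p,q\}$ meeting $\{i,j,k,l\}$ in one element the relevant coefficients are $3$ or $15$ and one must check the signed sum again vanishes; and for $\{p,q\}$ equal to one of the four support pairs (or sharing both elements with $\{i,j,k,l\}$ in another configuration) one gets a combination of $9, 3, 15, 5$ that must come out to $-4$ times the vector entry ($\pm 4$ or $0$). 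This is where care with the asymmetry of $\hM$ (the coefficient for $\{p,l\}$ versus $\{q,l\}$ when intersecting $\{p,q\}$, namely $3$ versus $15$) is essential — that asymmetry is precisely why the antisymmetrized combination $\gamma_{ik}+\gamma_{jl}-\gamma_{il}-\gamma_{jk}$ is the right eigenvector rather than some symmetric one.

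For $t_{ijkl}$ the computation is analogous but longer, since its support includes the single-index coordinates $\beta_j = 2$, $\beta_k = -2$ as well as four pair coordinates with coefficients $-3,3,-5,5$. I would again run through $\alpha = \emptyset$, $\alpha = m$, $\alpha = \{p,q\}$, using $\hM_{\emptyset,k} = 5n+34$ and $\hM_{\emptyset,\emptyset} = \hM_{\emptyset,\{k,l\}} = 5(n+6)$ for the first row, $\hM_{k,\emptyset} = \hM_{k,p} = 7$, $\hM_{k,k} = 35$, $\hM_{k,\{k,q\}} = 3$, $\hM_{k,\{p,k\}} = 35$, $\hM_{k,\{p,q\}} = 7$ for the single-index rows, and the full list of pair-row coefficients for the last block. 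The $n$-dependence must cancel in the $\emptyset$-row (the combination $2(5n+34) - 2(5n+34) + (-3-5+3+5)\cdot 5(n+6)$ type cancellation) — verifying this cancellation and then that the surviving constant equals $0$ is one natural checkpoint. The most error-prone step, and the one I'd expect to be the main obstacle, is bookkeeping the pair-rows $\alpha = \{p,q\}$ when $\{p,q\}$ shares exactly one index with $\{i,j,k,l\}$: there are many subcases (which of $i,j,k,l$ is shared, and whether it sits in the ``first'' or ``second'' slot of the pair, which matters because $\hM$ is not symmetric), and each contributes a short signed sum of $3$'s, $5$'s, and $15$'s that must be shown to equal $-4$ times a coordinate of $t_{ijkl}$ (often $0$, sometimes $\pm 12$ or $\pm 20$). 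A clean way to manage this is to tabulate once and for all the value of $\hM_{\{p,q\},\beta}$ as a function of $(|\{p,q\}\cap\beta|, \text{slot data})$ for each $\beta$ in the support, then assemble the six-term sums mechanically; no single step is deep, but the volume of cases is the real work.
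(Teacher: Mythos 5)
Your proposal is correct and follows essentially the same route as the paper: a direct coordinate-by-coordinate verification of $\hM\cdot w_{ijkl}$ and $\hM\cdot t_{ijkl}$, organized by the type of the output coordinate ($\emptyset$, single index, pair) and by its intersection pattern with $\{i,j,k,l\}$, using the tabulated entries of $\hM$. The checkpoints you single out — the balanced signs making the $\emptyset$- and single-index rows vanish, the cancellation of the $n$-dependence, and the asymmetric pair-row values $3$ versus $15$ producing the $\pm 4$ (resp.\ $\pm 8$, $\pm 12$, $\pm 20$) entries — are exactly the computations the paper carries out.
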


\begin{proof} Let's calculate the products $M\cdot w_{ijkl}$ and $M\cdot t_{ijkl}$.
\begin{itemize} 
\item The first coordinate of $M\cdot w_{ijkl}$ is
\begin{align*}
\big(\hM\cdot w_{ijkl}\big)_{\emptyset}&=\hM_{\emptyset,\lbrace i,k\rbrace}-\hM_{\emptyset,\lbrace i,l\rbrace}-\hM_{\emptyset,\lbrace j,k\rbrace}+\hM_{\emptyset,\lbrace j,l\rbrace}\\
&=5(n+6)-5(n+6)-5(n+6)+5(n+6)\\
&=0.
\end{align*}
\item For $p\neq i,j,k,l$, we have
\begin{align*}
\big(\hM\cdot w_{ijkl}\big)_p&=\hM_{p,\lbrace i,k\rbrace}-\hM_{p,\lbrace i,l\rbrace}-\hM_{p,\lbrace j,k\rbrace}+\hM_{p,\lbrace j,l\rbrace}\\
&=7-7-7+7\\
&=0.
\end{align*}

\item If $p=i$ or $p=j$, we have
\begin{align*}
\big(\hM\cdot w_{ijkl}\big)_i&=\hM_{i,\lbrace i,k\rbrace}-\hM_{i,\lbrace i,l\rbrace}-\hM_{i,\lbrace j,k\rbrace}+\hM_{i,\lbrace j,l\rbrace}\\
&=3-3-7+7\\
&=0
\end{align*}
and
\begin{align*}
\big(\hM\cdot w_{ijkl}\big)_j&=\hM_{j,\lbrace i,k\rbrace}-\hM_{j,\lbrace i,l\rbrace}-\hM_{j,\lbrace j,k\rbrace}+\hM_{j,\lbrace j,l\rbrace}\\
&=7-7-3+3\\
&=0.
\end{align*}

\item If $p=k$ or $p=l$, we have
\begin{align*}
\big(\hM\cdot w_{ijkl}\big)_j&=\hM_{k,\lbrace i,k\rbrace}-\hM_{k,\lbrace i,l\rbrace}-\hM_{k,\lbrace j,k\rbrace}+\hM_{k,\lbrace j,l\rbrace}\\
&=35-7-35+7\\
&=0
\end{align*}
and
\begin{align*}
\big(\hM\cdot w_{ijkl}\big)_l&=\hM_{l,\lbrace i,k\rbrace}-\hM_{l,\lbrace i,l\rbrace}-\hM_{l,\lbrace j,k\rbrace}+\hM_{l,\lbrace j,l\rbrace}\\
&=7-35-7+35\\
&=0.
\end{align*}

\item The coordinates $\lbrace p,q\rbrace$ with $p,q\neq i,j,k,l$ are
\begin{align*}
\big(\hM\cdot w_{ijkl}\big)_j&=\hM_{\lbrace p,q\rbrace,\lbrace i,k\rbrace}-\hM_{\lbrace p,q\rbrace,\lbrace i,l\rbrace}\\
&~~-\hM_{\lbrace p,q\rbrace,\lbrace j,k\rbrace}+\hM_{\lbrace p,q\rbrace,\lbrace j,l\rbrace}\\
&=5-5-5+5\\
&=0.
\end{align*}

\item The coordinates $\lbrace i,q\rbrace$ whith $q\neq j,k,l$ and $\lbrace j,q\rbrace$ with \linebreak $q\neq k,l$ are
\begin{align*}
\big(\hM\cdot w_{ijkl}\big)_j&=\hM_{\lbrace i,q\rbrace,\lbrace i,k\rbrace}-\hM_{\lbrace i,q\rbrace,\lbrace i,l\rbrace}\\
&~~-\hM_{\lbrace i,q\rbrace,\lbrace j,k\rbrace}+\hM_{\lbrace i,q\rbrace,\lbrace j,l\rbrace}\\
&=3-3-5+5\\
&=0
\end{align*}
and
\begin{align*}
\big(\hM\cdot w_{ijkl}\big)_j&=\hM_{\lbrace j,q\rbrace,\lbrace i,k\rbrace}-\hM_{\lbrace j,q\rbrace,\lbrace i,l\rbrace}\\
&~~-\hM_{\lbrace j,q\rbrace,\lbrace j,k\rbrace}+\hM_{\lbrace j,q\rbrace,\lbrace j,l\rbrace}\\
&=5-5-3+3\\
&=0.
\end{align*}

\item We have
\begin{align*}
\big(\hM\cdot w_{ijkl}\big)_{\lbrace i,j\rbrace}&=\hM_{\lbrace i,j\rbrace,\lbrace i,k\rbrace}-\hM_{\lbrace i,j\rbrace,\lbrace i,l\rbrace}\\
&~~-\hM_{\lbrace i,j\rbrace,\lbrace j,k\rbrace}+\hM_{\lbrace i,j\rbrace,\lbrace j,l\rbrace}\\
&=3-3-15+15\\
&=0,
\end{align*}
and
\begin{align*}
\big(\hM\cdot w_{ijkl}\big)_{\lbrace k,l\rbrace}&=\hM_{\lbrace k,l\rbrace,\lbrace i,k\rbrace}-\hM_{\lbrace k,l\rbrace,\lbrace i,l\rbrace}\\
&~~-\hM_{\lbrace k,l\rbrace,\lbrace j,k\rbrace}+\hM_{\lbrace k,l\rbrace,\lbrace j,l\rbrace}\\
&=3-15-3+15\\
&=0.
\end{align*}

\item We have
\begin{align*}
\big(\hM\cdot w_{ijkl}\big)_{\lbrace i,k\rbrace}&=\hM_{\lbrace i,k\rbrace,\lbrace i,k\rbrace}-\hM_{\lbrace i,k\rbrace,\lbrace i,l\rbrace}\\
&~~-\hM_{\lbrace i,k\rbrace,\lbrace j,k\rbrace}+\hM_{\lbrace i,k\rbrace,\lbrace j,l\rbrace}\\
&=9-3-15+5\\
&=-4.
\end{align*}

Similarly we calculate

\begin{align*}
\big(\hM\cdot w_{ijkl}\big)_{\lbrace i,l\rbrace}&=4\\
\big(\hM\cdot w_{ijkl}\big)_{\lbrace j,k\rbrace}&=4\\
\big(\hM\cdot w_{ijkl}\big)_{\lbrace j,l\rbrace}&=-4
\end{align*}

\end{itemize}

Hence $w_{ijkl}$ is an eigenvector of $\hM$ for the eigenvalue $-4$.

Let's calculate $\hM.t_{ijkl}$. The following coordinates are easy to calculate
\begin{itemize}
\item For $p\neq j,k$, $\big(\hM.t\big)_p=0$.
\item For $p,q\neq i,j,k,l$,
 \begin{align*}
 \big(\hM.t_{ijkl}\big)_{\lbrace p,q\rbrace}&=\big(\hM\cdot t_{ijkl}\big)_{\lbrace i,q\rbrace}=\big(\hM\cdot t_{ijkl}\big)_{\lbrace p,i\rbrace}\\
                                              &=\big(\hM\cdot t_{ijkl}\big)_{\lbrace k,q\rbrace}=\big(\hM\cdot t_{ijkl}\big)_{\lbrace p,k\rbrace}\\
                                              &=\big(\hM\cdot t_{ijkl}\big)_{\lbrace j,k\rbrace}=\big(\hM\cdot t_{ijkl}\big)_{\lbrace i,l\rbrace}\\
                                              &=\big(\hM\cdot t_{ijkl}\big)_{\lbrace p,j\rbrace}=\big(\hM\cdot t_{ijkl}\big)_{\lbrace j,q\rbrace}\\
                                              &=\big(\hM\cdot t_{ijkl}\big)_{\lbrace p,k\rbrace}=\big(\hM\cdot t_{ijkl}\big)_{\lbrace k,q\rbrace}\\
                                              &=0
\end{align*} 
\end{itemize}

We have
\begin{align*}
\big(\hM\cdot t_{ijkl}\big)_j=&-\big(\hM\cdot t_{ijkl}\big)_k\\
                          =&2\hM_{j,j}-2\hM_{j,k}-3\hM_{j,\lbrace i,j\rbrace}+3\hM_{j,\lbrace i,k\rbrace}\\
                           &-5\hM_{j,\lbrace j,l\rbrace}+5\hM_{j,\lbrace k,l\rbrace}\\
                          =&2\cdot 35-2\cdot 7-3\cdot 35+3\cdot 7-5\cdot 3+5\cdot 7\\
                          =&-8,
\end{align*}

\begin{align*}
\big(\hM\cdot t_{ijkl}\big)_{\lbrace i,j\rbrace}=&-\big(\hM\cdot t_{ijkl}\big)_{\lbrace i,k\rbrace}\\
                                             =&2\hM_{\lbrace i,j\rbrace,j}-2\hM_{\lbrace i,j\rbrace,k}-3\hM_{\lbrace i,j\rbrace,\lbrace i,j\rbrace}+3\hM_{\lbrace i,j\rbrace,\lbrace i,k\rbrace}\\
                                             &-5\hM_{\lbrace i,j\rbrace,\lbrace j,l\rbrace}+5\hM_{\lbrace i,j\rbrace,\lbrace k,l\rbrace}\\
                                             =&2\cdot 15-2\cdot 5-3\cdot 9+3\cdot 3-5\cdot 3+5\cdot 5\\
                                             =&12
\end{align*}

and

\begin{align*}
\big(\hM\cdot t_{ijkl}\big)_{\lbrace j,l\rbrace}=&-\big(\hM\cdot t_{ijkl}\big)_{\lbrace k,l\rbrace}\\
                                             =&2\hM_{\lbrace j,l\rbrace,j}-2\hM_{\lbrace j,l\rbrace,k}-3\hM_{\lbrace j,l\rbrace,\lbrace i,j\rbrace}+3\hM_{\lbrace j,l\rbrace,\lbrace i,k\rbrace}\\
                                             &-5\hM_{\lbrace j,l\rbrace,\lbrace j,l\rbrace}+5\hM_{\lbrace j,l\rbrace,\lbrace k,l\rbrace}\\
                                             =&2\cdot 15-2\cdot 5-3\cdot 15+3\cdot 5-5\cdot 9+5\cdot 15\\
                                             =&20
\end{align*}

Thus we have $\big(\hM\cdot t_{ijkl}\big)=-4t_{ijkl}$.
\end{proof}

\subsubsection{The exceptional case}

When $n$ is congruent to $2$ modulo $8$, one of the stable planes contains an eigenvector for the eigenvalue $-4$. In this cases, that we called exceptional, we need to find a new vector in order to determine the rank of $\hM$. In this purpose, we introduce the following vector
$$
z: (\gamma_{3m+1,n} = 1, \gamma_{3m+2,n} = -1).
$$
The other coordinates of~$z$ vanish.

\begin{prop}
If $n = 8p+2$,  the invariant plane spanned by $u_{3p+1}$ and $v_{3p+1}$ contains an eigenvector with eigenvalue~$-4$. Moreover, the vector $\displaystyle{z +\frac{m}{5m+7}u_{3m+1}}$ is annihilated by $(M+4)^2$, but not by $M+4$. 
\end{prop}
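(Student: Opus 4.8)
The plan is to reduce both assertions to the explicit $2\times2$ block describing the action of $M$ on $\mathrm{span}(u_{3p+1},v_{3p+1})$, furnished by the proposition on invariant planes above.

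\emph{The $(-4)$-eigenvector inside the exceptional plane.} In the basis $(u_i,v_i)$ the endomorphism $M$ restricts to
$$
A_i=\begin{pmatrix}28 & 32i-24-4n\\ 10 & 12i-12-2n\end{pmatrix},
$$
with $\det A_i=-16(n-i+6)$ and $\operatorname{tr}A_i=16+12i-2n$. Hence $-4$ is a root of the characteristic polynomial $X^2-(\operatorname{tr}A_i)X+\det A_i$ of $A_i$ precisely when $16+4\operatorname{tr}A_i+\det A_i=0$, which simplifies to $64i-24n-16=0$, i.e.\ $i=\frac{3n+2}{8}$; this is an integer in $\{1,\dots,n-1\}$ exactly when $n\equiv 2\pmod8$, and for $n=8p+2$ it equals $3p+1$. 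For such $n$ one then solves $(A_{3p+1}+4I)\binom{x}{y}=0$; the matrix there is $\left(\begin{smallmatrix}32 & 64p\\10 & 20p\end{smallmatrix}\right)$, so $x=-2py$ and the plane $\mathrm{span}(u_{3p+1},v_{3p+1})$ contains the eigenvector
$$
e:=v_{3p+1}-2p\,u_{3p+1},\qquad Me=-4e.
$$
(The other eigenvalue of $A_{3p+1}$ is $\operatorname{tr}A_{3p+1}+4=4(5p+7)$.)

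\emph{The length-two Jordan chain.} Put $m=p$, so that $n=8m+2$, and set $\xi:=z+c\,u_{3m+1}$ for the scalar $c$ determined below; note $3m+1<3m+2<n$ for $m\ge1$, so $z$ is well defined (for $m=0$, i.e.\ $n=2$, one would have $3m+2=n$, which is why the statement lives in the range $n\ge10$). It suffices to show $(M+4)\xi$ is a \emph{nonzero} multiple of $e$: then $(M+4)^2\xi=0$, while $(M+4)\xi\ne0$ forces $\xi\notin\ker(M+4)$. Since $(M+4)u_{3m+1}=32u_{3m+1}+10v_{3m+1}$ by the proposition on invariant planes, the whole argument reduces to computing $(M+4)z$, equivalently $Mz$. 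Because $z$ has only the two nonzero coordinates $\gamma_{3m+1,n}=1$, $\gamma_{3m+2,n}=-1$, one has $(Mz)_\alpha=\widehat M_{\alpha,\{3m+1,n\}}-\widehat M_{\alpha,\{3m+2,n\}}$ for each $\alpha\in\cE$, and I would evaluate these by running through the coordinate types $\alpha=\emptyset$, $\alpha=k$, $\alpha=\{k,l\}$ — and, within the last, through the sub-cases governed by how $\{k,l\}$ meets $\{3m+1,3m+2,n\}$ and by which member of each column pair is the squared one. Almost all such differences cancel, and the survivors reassemble into
$$
Mz=-4\,u_{3m+1}-2\,v_{3m+1}-4\,z,\qquad\text{hence}\qquad(M+4)z=-4\,u_{3m+1}-2\,v_{3m+1}.
$$
Then $(M+4)\xi=(32c-4)u_{3m+1}+(10c-2)v_{3m+1}$, which is proportional to $e=v_{3m+1}-2m\,u_{3m+1}$ exactly for the $c$ with $(32c-4)\cdot1=(10c-2)\cdot(-2m)$, i.e.\ $c(5m+8)=m+1$; for this $c$ one gets $(M+4)\xi=-\frac{6}{5m+8}\,e\ne0$, completing the proof.

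\emph{Expected main obstacle.} The only genuinely laborious step is the case analysis in the computation of $Mz$: one has to split correctly into all the cases of the table for $\widehat M$, track which member of each column pair is distinguished, and recognize the answer as $-4u_{3m+1}-2v_{3m+1}-4z$; one must also check that no collision of indices spoils it. Conceptually, this is exactly the ``exceptional'' configuration flagged earlier: when $n\equiv2\pmod8$ the invariant plane $\mathrm{span}(u_{3m+1},v_{3m+1})$ already carries a $(-4)$-eigenvector, so the generalized $(-4)$-eigenspace of $M$ gains a length-$2$ Jordan block $e\mapsto\xi$ rather than one more eigenvector, and exhibiting this block is what keeps under control the spectral count used to conclude that $0\notin\operatorname{Spec}(M)$, hence $\det M\ne0$ and the classes $\psi_i\psi_j,\psi_i^2,\kappa_{1,1}$ are a basis.
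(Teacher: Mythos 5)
Your strategy is exactly the paper's: restrict to the $2\times 2$ block $A_{3p+1}$ to produce the $(-4)$-eigenvector, compute $(\hM+4)z$ coordinate by coordinate from the table of $\hM$, and then adjust $z$ by a multiple of $u_{3m+1}$ so that its image under $\hM+4$ lands in the $(-4)$-eigenline. Your intermediate result $(\hM+4)z=-4u_{3m+1}-2v_{3m+1}$ agrees with the paper's (you only sketch the case analysis, but the stated outcome is the correct one), and your criterion for membership in the exceptional plane, $i=(3n+2)/8$, is a nice closed-form justification of why the exception is exactly $n\equiv 2\pmod 8$.

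The one substantive divergence is the coefficient: you get $c=\frac{m+1}{5m+8}$ where the statement (and the paper's proof) has $\frac{m}{5m+7}$ --- and your value is the correct one. The paper's derivation expands $\delta(\hM+4)u_{3m+1}$ as $\delta(28u_{3m+1}+10v_{3m+1})+4u_{3m+1}$, dropping the factor $\delta$ on the last term; this turns the correct condition ``$(32\delta-4)u_{3m+1}+(10\delta-2)v_{3m+1}$ proportional to $2mu_{3m+1}-v_{3m+1}$'' (which forces $\delta=\frac{m+1}{5m+8}$) into ``$28\delta u_{3m+1}+(10\delta-2)v_{3m+1}$ proportional to it'' (which forces $\delta=\frac{m}{5m+7}$). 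One can check directly, e.g.\ for $m=1$, $n=10$, that with $\delta=\frac{1}{12}$ the vector $(\hM+4)(z+\delta u_4)=-\tfrac43 u_4-\tfrac76 v_4$ is not proportional to $2u_4-v_4$, so $(\hM+4)^2(z+\tfrac1{12}u_4)\neq 0$: the proposition as literally stated is false and should be corrected to your $\frac{m+1}{5m+8}$. This is harmless for the rest of the paper, since all that is used downstream is the existence of some vector in $\ker(\hM+4)^2\setminus\ker(\hM+4)$ completing the block decomposition, and the eigenvalue $-4$ is nonzero either way. The only real gap on your side is that the computation of $\hM z$ is announced rather than carried out; since that case analysis is the entire content of the second half of the proof, it should be written out (as the paper does), but the answer you assert for it is right.
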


\begin{proof}
When $n$ is congruent to $2$ modulo $8$, in other terms, when there exists $m\in\N$ such that $n=8m+2$, the stable plan spanned by $u_{3m+1}$ and $v_{3m+1}$ is associated to the matrix
$$
\begin{pmatrix}
28 & 32(3m+1)-24-4(8m+2) \\
10 & 12(3m+1)-12-2(8m+2)
\end{pmatrix}
=
\begin{pmatrix}
28 & 64m \\
10 & 20m-4
\end{pmatrix}.
$$
Its characteristic polynomial is
$$\lambda^2-(20m+24)\lambda-80m-112$$
which factorizes into
$$(\lambda+4)(\lambda-20m-28).$$

Hence we have two eigenvectors.
\paragraph{\textbf{For the eigenvalue $-4$}} The system
$$\left\{
\begin{array}{cl}
28x-64my=-4x\\
10x(20m-4)y=-4y
    \end{array}
\right. \Leftrightarrow x+2my=0
$$
shows that $\mathcal{U}:=2mu_{3m+1}-v_{3m+1}$ is an eigenvector for the eigenvalue~$-4$.

~\\

If we write $\mathcal{V}$ an eigenvector for the eigenvalue $20m+28$, in the base $(\mathcal{U},\mathcal{V})$, the block corresponding to the $(3m+1)$st stable plan is written 
$$
\begin{pmatrix}
-4 & 0 \\
0  & 20m+28
\end{pmatrix}
$$

\paragraph{Vector of \textbf{$Ker(M+4)^2\backslash Ker(M+4)$}}
Now we consider the vector $z$.\\
 We have

\begin{itemize}
\item \begin{align*}
\big(\hM\cdot z\big)_{3m+1}&=-\big(\hM.z\big)_{3m+2}\\
                     &=\hM_{3m+1,\{ 3m+1,n\}}-\hM_{3m+1,\{ 3m+2,n\}}\\
                     &=3-7=-4.
\end{align*}

\item \begin{align*}
\big(\hM\cdot z\big)_{\{ p,3m+1\}}&=\hM_{\{ p,3m+1\},\{ 3m+1,n\}}-\hM_{\{ p,3m+1\},\{ 3m+2,n\}}\\
   &=3-5=-2.
\end{align*}

\item \begin{align*}
\big(\hM\cdot z\big)_{\{ 3m+1,p\}}&=\hM_{\{ 3m+1,p\},\{ 3m+1,n\}}-\hM_{\{ 3m+1,p\},\{ 3m+2,n\}}\\
   &=3-5=-2.
\end{align*}

\item \begin{align*}
\big(\hM\cdot z\big)_{\{ 3m+1,m+2\}}&=\hM_{\{ 3m+1,3m+2\},\{ 3m+1,n\}}\\
&~~-\hM_{\{ 3m+1,m+2\},\{ 3m+2,n\}}\\
   &=3-3=0.
\end{align*}

\item \begin{align*}
\big(\hM\cdot z\big)_{\{ 3m+1,n\}}&=\hM_{\{ 3m+1,n\},\{ 3m+1,n\}}-\hM_{\{ 3m+1,n\},\{ 3m+2,n\}}\\
   &=9-15=-6.
\end{align*}
\end{itemize}

Then $(\hM+4)\cdot z=-4u_{3m+1}-2v_{3m+1}$.

Now let's write $\displaystyle{\delta:=\frac{m}{5m+7}}$ and $\mathcal{Z}:=z+\delta u_{3m+1}$. We have

\begin{align*}
(\hM+4)\cdot \mathcal{Z}&=(\hM+4)\cdot z+\delta(\hM+4)\cdot u_{3m+1}\\
                   &=-4u-{3m+1}-2v_{3m+1}+\delta(28u_{3m+1}+10v_{3m+1})+4u_{3m+1}\\
                   &=28\delta u_{3m+1}+(10\delta-2)v_{3m+1}\\
                   &=\frac{-14}{5m+7}\mathcal{U}.
\end{align*}
 Since $\mathcal{U}$ is an eigenvector associated to the eigenvalue $-4$, we have
 $$(\hM+4)^2\cdot\mathcal{Z}=0.$$

\end{proof}

\subsubsection{The span of vectors u, v, w, t, Z} 

\begin{prop}
The vectors $u_i$, $v_i$, $w_{ijkl}$, $t_{ijkl}$, together with~$z$ in the case $n = 8p+2$, span the codimension~3 subspace of~$E$ given by the equations 
$$
\alpha=0,  \quad \sum_{i=1}^n \beta_i =0, \quad
\sum_{i<j} \gamma_{ij}=0.
$$
\end{prop}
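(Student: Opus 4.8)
The plan is to establish the equality by identifying the annihilator of the span inside $E^*$. First I would note that, reading off coordinates directly from the definitions, each of $u_i$, $v_i$, $w_{ijkl}$, $t_{ijkl}$, and $z$ has $\alpha$-coordinate $0$, has vanishing sum of $\beta$-coordinates, and has vanishing sum of $\gamma$-coordinates; thus the span is contained in the subspace $W$ defined by $\alpha=0$, $\sum_i\beta_i=0$, $\sum_{i<j}\gamma_{ij}=0$. Since these three linear equations involve pairwise disjoint sets of coordinates, $\dim W=N-3$ and $W=B_0\oplus\Gamma_0$, where $B_0$ is the hyperplane $\sum\beta_i=0$ inside the $\beta$-coordinates and $\Gamma_0$ the hyperplane $\sum\gamma_{ij}=0$ inside the $\gamma$-coordinates. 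So it is enough to prove that every linear functional annihilating all the generating vectors is a linear combination of the three functionals $\alpha$, $\sum_i\beta_i$, $\sum_{i<j}\gamma_{ij}$.

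Let $\phi\in E^*$ kill all the generators and write its components as $a$ (on the $\alpha$-coordinate), $b_i$ (on $\beta_i$) and $c_{ij}$ (on $\gamma_{ij}$, with $c_{ji}:=c_{ij}$). The component $a$ is free and accounts for the functional $\alpha$. The conditions $\phi(u_i)=b_i-b_{i+1}=0$ give $b_1=\dots=b_n$, so the $\beta$-part of $\phi$ is a multiple of $\sum_i\beta_i$. Everything then comes down to showing that the conditions $\phi(v_i)=\phi(w_{ijkl})=\phi(t_{ijkl})=0$ force $c_{ij}$ to be constant, since then the $\gamma$-part of $\phi$ is a multiple of $\sum_{i<j}\gamma_{ij}$ and we are done.

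Expanding these pairings (using $b_j=b_k$ in the last one), and writing $S_i:=\sum_{j\ne i}c_{ij}$, the relations become: $S_i=S_{i+1}$ for all $i$ from the $v_i$; $c_{ik}+c_{jl}=c_{il}+c_{jk}$ for all $1\le i<j<k<l\le n$ from the $w_{ijkl}$; and $3(c_{ik}-c_{ij})=5(c_{jl}-c_{kl})$ for all $1\le i<j<k<l\le n$ from the $t_{ijkl}$. It then remains to prove the purely combinatorial claim that these three families of linear identities admit only the constant solution $c_{ij}\equiv c$. I would argue it along the following lines: the $w$-identities say that, for $i<j$, the difference $c_{ik}-c_{jk}$ is independent of $k\in\{j+1,\dots,n\}$, which forces $c_{ij}=x_i+x_j$ for suitable $x_i$ up to terms involving only the largest indices; substituting this into the $t$-identities cuts the remaining freedom down to a one-parameter family; and the equal-row-sum condition $S_i\equiv S$ singles out the constant solution inside it. When $n\le 3$ there are no $w_{ijkl}$ or $t_{ijkl}$, but then $S_1=\dots=S_n$ by itself forces all $c_{ij}$ equal (and for $n\le 1$ there is nothing to prove); when $n=8p+2$ the extra vector $z$ lies in $W$ and only imposes the additional constraint $c_{3p+1,n}=c_{3p+2,n}$, so it changes nothing.

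Consequently the annihilator of the span is the $3$-dimensional space spanned by $\alpha$, $\sum_i\beta_i$ and $\sum_{i<j}\gamma_{ij}$, hence the span equals $W$. The step I expect to require genuine work is the combinatorial claim that the equal-row-sum condition, together with the partial quadrangle relations from the $w$'s (which supply only one of the three quadrangle relations on each $4$-element subset) and the weighted relations from the $t$'s, leaves no freedom beyond the constants; handling this uniformly in $n$, and separately the degenerate small-$n$ cases, is the main obstacle. An alternative to the annihilator route would be a direct dimension count: the $u_i$ visibly span $B_0$, and one checks that the $\gamma$-images of the $v_i$ surject onto the quotient of $\Gamma_0$ by the ``all row sums zero'' subspace while the $w_{ijkl}$ and the $\gamma$-parts of the $t_{ijkl}$ span that subspace itself; this leads to the same combinatorial core.
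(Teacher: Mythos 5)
Your setup is sound and the dual strategy is legitimate: you correctly compute that $\phi(u_i)=b_i-b_{i+1}$, $\phi(v_i)=S_i-S_{i+1}$, $\phi(w_{ijkl})=c_{ik}+c_{jl}-c_{il}-c_{jk}$ and (after using $b_j=b_k$) $\phi(t_{ijkl})=3(c_{ik}-c_{ij})-5(c_{jl}-c_{kl})$, so the proposition is indeed equivalent to the statement that these conditions force $c_{ij}$ to be constant. But that equivalence is exactly where the content of the proposition lives, and you have not proved it: the passage ``$c_{ij}=x_i+x_j$ up to terms involving only the largest indices, \dots cuts the freedom down to a one-parameter family'' is a sketch of a hope, not an argument (as you yourself flag), and it is not obviously repairable because the $w$'s give only one of the three quadrangle relations per $4$-subset, so the solution set of the $w$-system alone is strictly larger than $\{x_i+x_j\}$ already for $n=4$.

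More seriously, your disposal of the exceptional case is wrong, and this shows the combinatorial lemma \emph{as you state it} (constancy from the $v$-, $w$-, $t$-conditions alone) is false when $n\equiv 2\pmod 8$. The paper's rank analysis shows that for $n=8m+2$ the operator $\hM$ has a nontrivial Jordan block at the eigenvalue $-4$: one checks $(\hM+4)z=-4u_{3m+1}-2v_{3m+1}$, while the image of $\hM+4$ restricted to the span of $u,v,w,t$ meets the plane $\langle u_{3m+1},v_{3m+1}\rangle$ only in the line $\langle 16u_{3m+1}+5v_{3m+1}\rangle$; since $-4u-2v$ is not on that line, $z$ does \emph{not} lie in the span of the $u,v,w,t$. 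Dually, for such $n$ the system $S_i\equiv S$, $c_{ik}+c_{jl}=c_{il}+c_{jk}$, $3(c_{ik}-c_{ij})=5(c_{jl}-c_{kl})$ admits a non-constant solution, killed only by the extra condition $c_{3m+1,n}=c_{3m+2,n}$ coming from $z$. So any correct completion of your argument must use the $z$-constraint in an essential way for $n=8p+2$, whereas your text asserts it ``changes nothing.'' For comparison, the paper avoids the annihilator computation entirely: it argues constructively, reducing an arbitrary vector of $W$ to zero by subtracting generators row by row (via the auxiliary vectors $\tv_i=v_i-\frac13\sum_{m<i}t_{m,i,i+1,n}$), and the vector $z$ enters precisely at the step where the coefficient $3n+2-8i$ vanishes, i.e.\ $n=8p+2$, $i=3p+1$. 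That elimination scheme is the missing ``genuine work'' in your proposal, and it localizes the exceptional case instead of hiding it.
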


\paragraph{Proof.} First of all, it is easy to see that all vectors $u,v,w,t,z$ satisfy the three linear equations given above.

Now consider a vector $s$ in $E$ satisfying the three equations. We will show that we can make it vanish by adding an appropriate linear combination of vectors $u_i$, $v_i$, $w_{ijkl}$, $t_{ijkl}$, and $z$ if $n = 8p+2$.

For $1 \leq i \leq n-2$ let
$$
\tv_i = v_i - \frac13 \left( t_{1,i,i+1,n} + t_{2,i,i+1,n} + \dots + t_{i-1,i,i+1,n} \right).
$$

\begin{center}
\includegraphics[scale=0.38]{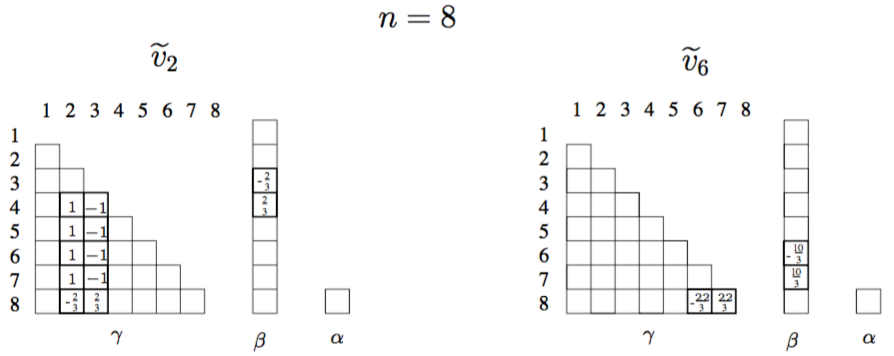}
\end{center}

This vector has the property that its only nonzero $\gamma$-coordinates have the form $\gamma_{ij}$ or $\gamma_{i+1,j}$. Moreover, the coordinates with first index~$i$ add up to $(n-1-i)-\frac53(i-1) = \frac{3n+2-8i}3$.
Similary, its coordinates with first index $i+1$ add up to the opposite number $-\frac{3n+2-8i}3$.

Now we perform the {\bf first step} that consists in annihilating the $\gamma$-coordinates of~$s$. This is done using the following sequence of operations.

As a preparation we add to $s$ a multiple of $v_{n-1}$ so as to annihilate the sum $\sum_{i=1}^{n-1} \gamma_{in}$. We will not use the vector $v_{n-1}$ again in the following operations, and one can easily check that in all other vectors $u,v,w,t,z$ the sum of coordinates $\gamma_{in}$ is equal to~0. Thus none of the following operations will change this sum, and it will remain equal to~0 all the time. Now we can start killing the coordinates in earnest.

First we add a multiple of $t_{1234}$ to annihilate $\gamma_{12}$. Then we add a multiple of $v_1 = \tv_1$ so as to annihilate the sum $\sum_j \gamma_{1j}$. Note that this does not change $\gamma_{12}$. Now we use the vectors $w_{12kl}$ to annihilate all the coordinates $\gamma_{1j}$. Note that this does not change either $\gamma_{12}$ or $\sum_j \gamma_{1j}$.

We have now achieved the vanishing of all coordinates $\gamma_{1j}$ . The next step is to kill the coordinates $\gamma_{2j}$, and then we continue to increase the first index of the $\gamma$-coordinates one by one. Assume that we have already achieved the vanishing of all $\gamma$-coordinates with first index less than~$i$ and let us do the coordinates $\gamma_{ij}$ for $j$ from $i+1$ to~$n$. We assume that $i \leq n-3$. 

First we use the vector $t_{i,i+1,i+2,i+3}$ to kill the coordinate $\gamma_{i,i+1}$. Second, we use the vector $\tv_i$ to annihilate the sum
$\sum_{j=i+1}^n \gamma_{ij}$. This is always possible unless $3n+2-8i = 0$. In this case we have $n = 8p+2$, $i=3p+1$, so we can use the vector~$z$ instead. Third, we use the vectors $w_{i,i+1,k,l}$ to kill all coordinates $\gamma_{ij}$ for our given value of~$i$. Note that all these operations do not change the coordinates $\gamma_{i',j}$ with $i' <i$. 

Once we are finished with $i = n-3$ we are left with only three possibly nonzero $\gamma$-coordinates: $\gamma_{n-2,n-1}$, $\gamma_{n-2,n}$, and $\gamma_{n-1,n}$. Recall that as a preliminary step we have achieved the vanishing $\sum \gamma_{in}=0$; now this condition reads $\gamma_{n-2,n} = - \gamma_{n-1,n}$. Thus we can use the vector $\tv_{n-2}$ to kill both $\gamma_{n-2,n}$ and $\gamma_{n-1,n}$. Now the remaining coordinate $\gamma_{n-2,n-1}$ is automatically equal to 0, because the sum of all $\gamma$-coordinates of $s$ was 0 from the start. 

After getting rid of the $\gamma$-coordinates, the {\bf second step} consists in eliminating the $\beta$-coordinates without changing the $\gamma$-coordinates. This step is much simpler: we just use the vectors $u_i$.
\qed

\subsubsection{Complement subspace}

 We write $F$ the span of $u, v, w, t, \cZ$. If we write 
 $$ \left( \begin{array}{c}
\alpha \\
\beta_1 \\
\vdots \\
\beta_n \\
\gamma_{1,2}\\
\vdots \\
\gamma_{n-1,n}
\end{array}\right)$$
any vector of $E$, $F$ corresponds to the subspace of vectors satisfying the equations
\begin{align*}
\alpha&=0\\
\sum_{i=1}^n\beta_i&=0\\
\sum_{i<j}\gamma_{i,j}&=0.
\end{align*}
We look now for a complement of $F$ in $E$. Hence we introduce the vectors $a, b$ and $c$ of $E$ with the following nonzero coordinates
\begin{align*}
a &: (\alpha =1),\\
b &: (\beta_1=1),\\
c &: (\gamma_{1,2}=1).
\end{align*}

We calculate the image of $a$. We have
$$\big(\hM\cdot a\big)_{\emptyset}=\hM_{\emptyset,\emptyset}=5(n+6).$$
For $1\leq i\leq n$, we have
$$\big(\hM\cdot a\big)_i=\hM_{i,\emptyset}=7$$
and for $1\leq i<j\leq n$, we have
$$\big(\hM\cdot a\big)_{\{ i,j\}}=\hM_{\{i,j\},\emptyset}=5.$$

Now we calculate the image of $b$. We have
$$\big(\hM\cdot b\big)_{\emptyset}=\hM_{\emptyset,1}=5n+34$$
and
$$\big(\hM\cdot b\big)_1=\hM_{1,1}=35$$

For $2\leq i\leq n$, we have
$$\big(\hM\cdot b\big)_i=\hM_{i,1}=7$$
and 
$$\big(\hM\cdot b\big)_{\{1,i\}}=\hM_{\{ 1,i\},1}=15.$$

For $2\leq i,j\leq n$, we have
$$\big(\hM\cdot b\big)_{\{i,j\}}=\hM_{\{ i,j\},1}=5.$$

Let's calculate the image of $c$. For the first coordinate we have
$$\big(\hM\cdot c\big)_{\{1,2\}}=\hM_{\emptyset,\{1,2\}}=5(n+6),$$
$$\big(\hM\cdot c\big)_1=\hM_{1,\{1,2\}}=3$$
and
$$\big(\hM\cdot c\big)_2=\hM_{2,\{1,2\}}=35.$$
For $3\leq i\leq n$, we have
$$\big(\hM\cdot c\big)_i=\hM_{i,\{ 1,2\}}=7.$$
We have
$$\big(\hM\cdot c\big)_{\{1,2\}}=\hM_{\{1,2\},\{1,2\}}=9.$$
For $2\leq i\leq n$, we have
$$\big(\hM\cdot c\big)_{\{1,i\}}=\hM_{\{1,i\},\{1,2\}}=3$$
and for $3\leq i\leq n$, we have
$$\big(\hM\cdot c\big)_{\{2,i\}}=\hM_{\{2,i\},\{1,2\}}=15.$$
For $3\leq i<j\leq n$, we have
$$\big(\hM\cdot c\big)_{\{i,j\}}=\hM_{\{i,j\},\{1,2\}}=5.$$

Modulo the subspace $F$, the images of these vectors only depend on the first coordinate, the sum of the $n$ following coordinates and the sum of the others coordinates, hence on $3$ numbers. We can write that, modulo $F$,

$$\big(\hM\cdot a\big)\sim \left( \begin{array}{c}
\big(\hM\cdot a\big)_{\emptyset} \\
\sum_{i=1}^n\big(\hM\cdot a\big)_i \\
0\\
\vdots \\
0\\
\sum_{i<j}\big(\hM\cdot a\big)_{\{i,j\}}\\
\vdots \\
0
\end{array}\right), 
\big(\hM\cdot b\big)\sim  \left( \begin{array}{c}
\big(\hM\cdot b\big)_{\emptyset} \\
\sum_{i=1}^n\big(\hM\cdot b\big)_i\\
0\\
\vdots \\
0\\
\sum_{i<j}\big(\hM\cdot b\big)_{\{i,j\}}\\
0\\
\vdots \\
0
\end{array}\right)$$

$$
\mbox{ and }
\big(\hM\cdot c\big)\sim 
 \left( \begin{array}{c}
\big(\hM\cdot c\big)_{\emptyset} \\
\sum_{i=1}^n\big(\hM\cdot c\big)_i \\
0\\
\vdots \\
0\\
\sum_{i<j}\big(\hM\cdot c\big)_{\{i,j\}}\\
0\\
\vdots \\
0
\end{array}\right).
$$

Let's calculate these coordinates, we have

\begin{align*}
\big(\hM\cdot a\big)&=\hM_{\emptyset,\emptyset}a+\sum_{i=1}^n\hM_{i,\emptyset}b+\sum_{i<j}\hM_{\{i,j\},\emptyset}c\\
          &=5(n+6)a+7nb+5\binom{n}{2}c.\\
\end{align*}

\begin{align*}
\big(\hM\cdot b\big)&=\hM_{\emptyset,1}a+\sum_{i=1}^n\hM_{i,1}b+\sum_{i<j}^n\hM_{\{i,j\},1}c\\
  &=(5n+34)a+\big(35+7(n-1)\big)b\\
  &~~+\big(15(n-1)+5((\binom{n}{2}-(n-1))\big)c\\
  &=(5n+34)a+(7n+28)b+\frac{5}{2}(n-1)(n+4)c.
\end{align*}

\begin{align*}
\big(\hM\cdot c\big)=&\hM_{\emptyset,\{1,2\}}a+\sum_{i=1}^n\hM_{i,\{1,2\}}b+\sum_{i<j}\hM_{\{i,j\},\{1,2\}}c\\
   =&5(n+6)a+\big(3+35+7(n-2)\big)b\\
    &+\Big(9+3(n-2)+15(n-2)\\
    &~~~~~+5\big(\binom{n}{2}-(n-1)-(n-2)\big)\Big)c\\
   =&5(n+6)a+(7n+24)b+\big(\frac{5}{2}n^2+\frac{11}{2}n-12\big)c.
\end{align*}

Hence we have the following block
$$
\begin{pmatrix}
5(n+6) & 5n+34 & 5(n+6) \\
7n & 7n+28 & 7n+24 \\
\frac{5}{2}n(n-1) & \frac{5}{2}(n-1)(n+4) & \frac{5}{2}n^2+\frac{11}{2}n-12
\end{pmatrix}.
$$
Its determinant is $-32(n+6)(2n+15)$ which never vanishes for $n\in\mathbb{N}$.

\subsection{Conclusion}

We have rewritten $\hM$ as a block upper triangular matrix whose block have been described in the previous sections. The determinant of $\hM$ is now easy to calculate through this new matrix. This last matrix is composed by $n-1$ blocks of size $2$, a block of size $3$ and blocks of size $-4$. The latter type of block appears $\frac{n(n-1)}{2}+n+1-\big[2(n-1)+3\big]=\frac{n(n-3)}{2}$~~times. Then we have
$$\det\hM=\prod_{i=1}^{n-1}(-16(n-i+6))\cdot (-4)^{\frac{n(n-3)}{2}}\cdot(-32)(n+6)(2n+15)$$
which is better written
$$\det\hM=(-1)^{\frac{n(n-1)}{2}}2^{n^2+n+1}(2n+15)\frac{(n+6)!}{6!}.$$
In this form we see clearly that $\det\hM$ never vanishes, proving that the classes $\kappa_{1,1},\psi_i^2,\psi_i\psi_j$ are independent. Since we know that the other classes lie in the span of these classes, they generate the degree $2$ group of $R^*(\cM_{4,n})$. Hence we have
$$R^2(\cM_{4,n})\simeq <\psi_1^2,...,\psi_n^2,\kappa_{1,1}>.$$

~\\

Thus we obtained the complete description of the ring.

\newpage
\bibliographystyle{plain}
\bibliography{biblio}

\end{document}